\newtheorem{theorem}{Theorem}[section]
\newtheorem{lemma}[theorem]{Lemma}
\newtheorem*{lemma*}{Lemma}
\theoremstyle{definition}
\newtheorem{example}[theorem]{Example}
\newtheorem{question}[theorem]{Question}
\theoremstyle{remark}
\numberwithin{equation}{section}
\renewcommand{\bar}[1]{\overline{#1}}
\newcommand{\xx}{\mathbb{X}}
\newcommand{\yy}{\mathbb{Y}}
\newcommand{\cc}{\mathbb{C}}
\newcommand{\dd}{\mathbb{D}}
\newcommand{\rr}{\mathbb{R}}
\newcommand{\abs}[1]{\lvert#1\rvert}
\newcommand{\C}{\mathbb{C}}
\newcommand{\W}{\mathscr{W}}
\newcommand{\R}{\mathbb{R}}
\newcommand{\X}{\mathbb{X}}
\newcommand{\Y}{\mathbb{Y}}
\newcommand{\dtext}{\textnormal d}
\newcommand{\onto}{\xrightarrow[]{{}_{\!\!\textnormal{onto\,\,}\!\!}}}
\DeclareMathOperator{\diam}{diam}
\DeclareMathOperator{\re}{Re}
\DeclareMathOperator{\im}{Im}
\DeclareMathOperator{\loc}{loc}
\def\leq{\leqslant}
\def\geq{\geqslant}
\def\le{\leqslant}
\def\ge{\geqslant}
\def\XXint#1#2#3{{\setbox0=\hbox{$#1{#2#3}{\int}$}\vcenter{\hbox{$#2#3$}}\kern-.5\wd0}}
\def\XXiint#1#2#3{{\setbox0=\hbox{$#1{#2#3}{\iint}$}\vcenter{\hbox{$#2#3$}}\kern-.5\wd0}}
\begin{document}
\title{The Sobolev Jordan-Sch\"onflies Problem}

\author[A. Koski]{Aleksis Koski}
\address{Department of Mathematics and Statistics, P.O.Box 35 (MaD) FI-40014 University of Jyv\"askyl\"a, Finland}
\email{aleksis.t.koski@jyu.fi}

\author[J. Onninen]{Jani Onninen}
\address{Department of Mathematics, Syracuse University, Syracuse,
NY 13244, USA and  Department of Mathematics and Statistics, P.O.Box 35 (MaD) FI-40014 University of Jyv\"askyl\"a, Finland
}
\email{jkonnine@syr.edu}
\thanks{  A. Koski was supported by the Academy of Finland grant number 307023.
J. Onninen was supported by the NSF grant  DMS-1700274.}

\subjclass[2010]{Primary 46E35, 58E20}


\keywords{Sobolev homeomorphisms, Sobolev extensions}

\begin{abstract} We consider the planar unit disk $\mathbb D$ as the reference configuration and a Jordan domain $\Y$ as the deformed configuration, and
study the problem of extending a given boundary homeomorphism $\varphi \colon \partial \mathbb D \onto \partial \Y$ as a Sobolev homeomorphism of the complex plane. Investigating such a Sobolev variant of the classical  Jordan-Sch\"onflies  theorem is  motivated by the well-posedness of the related pure displacement
variational questions in the theory of Nonlinear Elasticity (NE)  and Geometric Function Theory (GFT). Clearly, the necessary condition  for the boundary mapping $\varphi$ to admit a $\W^{1,p}$-Sobolev homeomorphic extension is that it  first admits a continuous $\W^{1,p}$-Sobolev extension. For an arbitrary target domain $\Y$ this, however, is not sufficent. Indeed, {\bf first} for each $p<\infty$ we construct   a Jordan domain $\Y$ and a homeomorphism $\varphi \colon \partial \mathbb D \onto \partial \Y$ which admits a continuous $\W^{1,p}$-extension but does not even admit a
$\W^{1,1}$-homeomorphic extension.  {\bf Second}, for  a quasidisk target $\Y$ and  the whole range of $p$,  we prove that a boundary homeomorphism $\varphi \colon \partial \mathbb D \onto \partial \Y$ admits a $\W_{\loc}^{1,p}$-homeomorphic extension to $\C$ if and only if it admits a $\W^{1,p}$-extension to the unit disk.  Quasidisks have been a subject of intensive study in GFT.  They do not allow for singularities on the boundary such as cusps. {\bf Third}, for any power-type cusp target there is a boundary homeomorphism from  the unit circle whose harmonic extension has finite Dirichlet energy but does not  have a homeomorphic extension in $\W^{1,2} (\mathbb D, \C)$. Surprisingly, the Dirichlet integral ($p=2$) plays a unique role for the Sobolev  Jordan-Sch\"onflies   Problem in the case of cusp targets. Even more,  {\bf fourth} we prove that if  the target $\Y$ has piecewise smooth boundary,  $p\not =2 $ and $\varphi \colon \partial \mathbb D \onto \partial \Y$ has a  $\W^{1,p}$-Sobolev extension to $\mathbb D$, then it admits a homeomorphic extension to $\C$ in $\W_{\loc}^{1,p}(\C, \C)$.  {\bf Fifth}, if in addition $\Y$ is quasiconvex, then the one-sided  Sobolev  Jordan-Sch\"onflies problem has a solution when $p=2$. Indeed,  we show that the  harmonic extension of $\varphi \colon \partial \mathbb D \onto \partial \Y$ has a finite Dirichlet integral if and only if $\varphi$ admits a homeomorphic extension $h \colon \overline {\mathbb D} \onto \overline{\Y}$ with finite Dirichlet energy.
\end{abstract}

\maketitle
\section{Introduction}
Let $\mathbb D$ be the planar unit disk and $\varphi \colon \partial \mathbb D \to \mathbb C$ a topological embedding. The Jordan-Sch\"oenflies theorem states that there is a self-homeomorphism $h$ of the entire complex plane onto itself which  coincides with $\varphi$ on $\partial \mathbb D$. In particular, the set $\varphi (\partial \mathbb D)$ separates the plane into two domains, one bounded and the other unbounded. Throughout this text  $\Y\subset \C$ is a bounded Jordan domain  and $\varphi \colon \partial \mathbb D \onto \partial \Y$ a given boundary homeomorphism. The {\it Sobolev Jordan-Sch\"onflies  Problem} asks whether there exists a Sobolev  homeomorphism $h \colon \C \onto \C$ which coincides with $\varphi$ on $\partial \mathbb D$. Obviously,
for the boundary map $\varphi$ to admit a Sobolev homeomorphic extension it must first admit a Sobolev extension.
\vskip0.2cm
\begin{sjsp}  {\it For which $\Y \subset \C $ and $p\in [1, \infty]$ does every boundary homeomorphism $\varphi \colon \partial \mathbb D \onto \partial \mathbb Y$ that admits a continuous extension to $\overline{\mathbb D }$ in the Sobolev class $\W^{1,p}(\mathbb D, \C)$ also admit a homeomorphic extension $h \colon \C \onto \C$  in  $\W^{1,p}_{\loc} (\C, \C)$?} 
\end{sjsp}
\vskip0.2cm

It is worth noting  that the condition of a homeomorphism $\varphi \colon \partial \mathbb D \onto \partial \Y$ admitting a continuous  extension to $\overline{\mathbb D}$ in $\W^{1,p} (\mathbb D, \C)$ may be  characterized analytically. Indeed, for $1<p<\infty$,  the boundary mapping $\varphi $ admits a continuous $\W^{1,p}$-extension  if and only if it satisfies the so-called {\it $p$-Douglas condition},
\begin{equation}\label{eq:pDouglas}
\int_{\partial \mathbb D} \int_{\partial \mathbb D} \frac{\abs{\varphi(x) -\varphi(y)}^p  }{\abs{x-y}^p} \, \dtext x \, \dtext y < \infty \, , 
\end{equation}
for a proof we refer to~\cite[p. 151-152]{Stb}. The condition~\eqref{eq:pDouglas} is  known as the {\it Douglas condition}~\cite{Do} when  $p=2$.
Equivalently, $\varphi \colon \partial \mathbb D \onto \partial \Y$ satisfies the  $p$-Douglas condition if the $p$-harmonic extensions of both coordination functions $\im \varphi \colon \partial \mathbb D \to \R$ and    $\re \varphi \colon \partial \mathbb D \to \R$ belong to the Sobolev class $\W^{1,p} (\mathbb D, \R)$. The case $p=\infty$ on the other hand follows from the classical {\it Kirszbraun extension theorem}~\cite{Ki} which says that a mapping $\varphi \colon \partial \mathbb D \to \C$ has a Lipschitz extension to $\overline{\mathbb D}$ if and only if $\varphi$ is Lipschitz regular. In the other endpoint case $p=1$, according to {\it Gagliardo's theorem}~\cite{Ga}  a given $\varphi \colon \partial \mathbb D \onto \partial \Y$ has a Sobolev extension to $\mathbb D$ in $\W^{1,1} (\mathbb D , \mathbb C)$ exactly when $\varphi \in \mathscr L^1(\partial \mathbb D)$.  Analogously, we say that a boundary mapping $\varphi \colon \partial \mathbb D \to \C$ enjoys the {\it $1$-Douglas condition} provided $\varphi \in \mathscr L^1(\partial \mathbb D)$.

One of the reasons to study  the Sobolev Jordan-Sch\"onflies  problem  comes from the variational approach to Geometric Function Theory (GFT)~\cite{AIMb, IMb, Reb}, where the general framework of Nonlinear Elasticity (NE)~\cite{Anb, Bac, Cib}  is extremely fruitful and significant.  By the very assumptions of hyperelasticity, we  enquire into homeomorphisms $h  \colon  \mathbb D \onto \mathbb Y $ of smallest \textit{stored energy}
\begin{equation}\label{energ}
\mathsf E [h] =  \int_\mathbb D \mathbf {\bf E}(x,h, Dh )\,  \dtext x\, ,   \qquad  \mathbf E \colon  \mathbb D \times \mathbb Y \times \mathbb R^{2\times 2} 
\end{equation}
where   the so-called \textit{stored energy function} $\mathbf E$ characterizes the mechanical and elastic properties of the material occupying the domains.  We denote the class of   homeomorphisms $h \colon \mathbb D \onto \mathbb Y$  in the  Sobolev space $\mathscr W^{1,p} (\mathbb D, \mathbb C)$  by  $\mathscr H^{p}(\mathbb D, \mathbb Y)$ for $1\le p \le \infty$. In the related pure displacement variational questions one considers the class $\mathscr H_\varphi^p (\mathbb D , \mathbb Y)$ of Sobolev homeomorphisms $h \colon \overline{\mathbb D} \onto \overline{\Y}$ equal to  $\varphi$ on the boundary such that  $h\in \mathscr H^p (\mathbb D , \mathbb Y)$. 
One quickly runs into serious difficulties when passing to a weak limit of an
energy-minimizing sequence of $\W^{1,p}$-Sobolev homeomorphisms (injectivity can be lost)~\cite{IOan, IOinver}.  Therefore, in search for mathematical models of hyperelasticity, we must  adopt such limits as legitimate deformations and still comply, as much as possible, with the principle of {\it non-interpenetration of matter}. 
 When $p\ge 2$, an axiomatic assumption in the theories of  NE and GFT, such limits are monotone mappings~\cite{IOmoapprox, IOmo}.  Monotonicity, a concept by Morrey~\cite{Mor}, simply means that for a continuous $h \colon \overline{\mathbb D} \to \overline{\Y}$ the preimage $h^{-1} (y_\circ)$ of a point  $ y_\circ \in \overline{\Y}$ is a continuum in $\overline{\mathbb D}$.  Non-injective energy-minimal solutions, being monotone, may squeeze but not fold the $2D$-plates
or thin films.  In the case of   pure displacement  variational questions this naturally leads as a first step us to inquire whether the class  $\mathscr H_\varphi^p (\mathbb D , \mathbb Y)$ is nonempty. It is worth noting  that in the frictionless setting we always know that $\mathscr H^p (\mathbb D , \mathbb Y) \not= \emptyset$ when $p \le 2$ thanks to the Riemann Mapping Theorem.  
 
 Roughly speaking, prior to this paper the Sobolev Jordan-Sch\"onflies  Problem was understood only when $p<2$ or $p=\infty$. Here we focus on the remaining important cases $2\le p<\infty$.  Before giving a  detailed description  we  summarize the status of the problem  in the next table.

 \vskip0.3cm
  \begin{tabular}{ |p{3.0cm}||p{2.0cm}|p{2.0cm}|p{2.0cm}|p{1.3cm}|  }
 \hline
 \multicolumn{5}{|c|}{The known answers of  the  Sobolev Jordan-Sch\"onflies  Problem } \\
 \hline
$\partial \Y$& $1\le p <2$ & $p=2$ & $2<p<\infty$ & $p=\infty$\\
 \hline
Arbitrary     & Negative    &Negative&   Example~\ref{ex:zhangp}& Positive \\
Lipschitz graph   &  Positive    &  Positive &    Positive  &  Positive  \\
Quasicircle  &   Positive   & Theorem~\ref{thm:quasidisk}   & Theorem~\ref{thm:quasidisk}&  Positive \\
Piecewise smooth& Positive   & Example~\ref{ex:cuspp=2}&Theorem~\ref{thm:mainp>2}   &  Positive  \\
\hline
\end{tabular}
 \vskip0.3cm
 
The proofs of the previously known results rely mostly on careful study of analytical ways of extending the boundary map such as the harmonic extension and the Beurling-Ahlfors extension. It is, however, not to be expected that these methods are able to provide a complete picture of the problem. Hence we have turned to new, more direct methods of constructing homeomorphic extensions to prove our main theorems.

\subsection{Arbitrary target}

The Sobolev Jordan-Sch\"onflies  Problem is completely understood when $p=\infty$ due to a result of   Kovalev~\cite{Kovalev}.

\begin{theorem}\label{thm:lip}\textnormal{{\bf ($p=\infty$)}} Let $\varphi \colon \partial \mathbb D \to \cc$ be a Lipschitz embedding. Then $\varphi$  admits a homeomorphic Lipschitz extension to  $\C$. The Lipschitz constant of such an extension depends linearly on the Lipschitz constant of $\varphi$.
\end{theorem}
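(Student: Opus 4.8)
The plan is to produce a Lipschitz homeomorphic extension over each of the two Jordan domains determined by $\Gamma:=\varphi(\partial\mathbb D)$ and to glue the two pieces along $\partial\mathbb D$. After a dilation we may assume $\Lip(\varphi)=1$, so that $\Gamma$ is a rectifiable Jordan curve of length at most $2\pi$ and diameter at most $2$; write $\Omega$ for the bounded component of $\C\setminus\Gamma$. One should keep in mind that the Kirszbraun extension theorem already provides \emph{some} $1$-Lipschitz map $F\colon\overline{\mathbb D}\to\C$ with $F|_{\partial\mathbb D}=\varphi$, so the entire content of the theorem is the upgrade of such an $F$ to an \emph{injective} map without losing the linear Lipschitz bound. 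The unbounded component is treated by the same construction run ``outward'': fill the region between $\Gamma$ and a large circle, then extend radially out to $\infty$. Handling the exterior this way, rather than by an inversion, is what preserves the dependence $\Lip(h)\lesssim\Lip(\varphi)$, since an inversion centered in $\Omega$ has distortion of order $\operatorname{inradius}(\Omega)^{-2}$ on $\Gamma$, which is uncontrolled when $\Omega$ is thin.

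For the bounded domain $\Omega$ I would run a quantitative version of the classical ``shelling'' proof of Jordan--Schoenflies. Construct a sequence of nested Jordan curves $\Gamma=\Gamma_0\supset\Gamma_1\supset\Gamma_2\supset\cdots$ exhausting $\overline\Omega$ -- for instance with $\Gamma_{k+1}$ cut out from the domain $\Omega_k$ bounded by $\Gamma_k$ as (the relevant boundary component of) $\{x\in\Omega_k:\dist(x,\Gamma_k)\ge\delta_k\}$ for a geometrically decreasing sequence $\delta_k$, allowing $\Gamma_{k+1}$ to split into several curves where $\Omega_k$ has a thin neck. Each region $A_k$ between consecutive curves is a topological annulus, and the key point is to map a round annulus onto $A_k$ by a Lipschitz homeomorphism with constant $\lesssim 1$. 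This is possible because, although $\Gamma_k$ may be a very irregular rectifiable curve, the collar $A_k$ still admits a \emph{Lipschitz surjection} from $(\text{arclength parameter})\times(0,\delta_k)$: moving a point tangentially along $\Gamma_k$ and outward by a bounded amount displaces it by a controlled amount, and the \emph{absence} of a matching lower bound -- which would fail wherever $\Gamma_k$ nearly touches itself -- does no harm, since we only ask for a Lipschitz, not a bi-Lipschitz, map. Concatenating these annular maps, fitted to the corresponding concentric annuli filling $\mathbb D$, yields $h\colon\overline{\mathbb D}\onto\overline\Omega$ extending $\varphi$, with continuity across the common limit set guaranteed by $\delta_k\to 0$. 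An alternative route to this core step is to approximate $\Gamma$ by inscribed Jordan polygons $P_n$ with piecewise-linear interpolants $\varphi_n$ (which are $O(1)$-Lipschitz), solve the problem for each $P_n$ by a piecewise-linear Schoenflies map whose Lipschitz constant is kept bounded by arranging that no source triangle is \emph{expanded}, and pass to a subsequential limit.

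The main obstacle, in either implementation, is to maintain \emph{injectivity} and the uniform Lipschitz bound \emph{simultaneously} near the thin parts of $\Omega$: where $\Gamma$ has near-self-contacts the natural collars (or triangulations) overlap, and one must organize the construction so that these overlaps are resolved by \emph{collapsing} -- which is free for a one-sided Lipschitz estimate -- rather than by stretching. This is exactly the asymmetry that makes the endpoint $p=\infty$ behave so differently from finite exponents and from the bi-Lipschitz problem: the bi-Lipschitz analogue of the statement is false for a general Jordan curve and needs a chord-arc/quasicircle hypothesis, whereas here no regularity of $\Gamma$ beyond being a Lipschitz image of the circle is used. Once the construction is set up with this in mind, what remains -- checking that the glued map is a homeomorphism of all of $\C$ and that the constants accumulated over the countably many collars (or over the polygonal approximation) stay linear in $\Lip(\varphi)$ -- is routine.
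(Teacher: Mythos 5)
The paper does not actually prove this statement: Theorem~\ref{thm:lip} is quoted from Kovalev~\cite{Kovalev}, where the proof is a substantial piece of work, so your proposal must be judged on its own and it has genuine gaps exactly at the point you yourself flag as ``the main obstacle''. In the shelling construction, the core claim you need is that each collar $A_k$ between consecutive curves is the image of a round annulus under a \emph{homeomorphism} with Lipschitz constant $\lesssim 1$. Your justification only produces a Lipschitz \emph{surjection} from $(\text{arclength})\times(0,\delta_k)$, and the remark that ``the absence of a lower bound does no harm'' is where the argument breaks: a homeomorphism cannot collapse, so near a near-self-contact of $\Gamma_k$ (where two boundary pieces at arclength distance $\sim 1$ lie at Euclidean distance $\ll \delta_k$) injectivity forces the parametrization to resolve the overlap by a genuine choice of where the two sheets go, and it is precisely there that one must prove a uniform Lipschitz bound is still possible; nothing in the proposal does this. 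There are also structural problems upstream: the level sets $\{x\in\Omega_k:\dist(x,\Gamma_k)\ge\delta_k\}$ need not bound Jordan curves and, as you note, can split into several components, but then $A_k$ is not an annulus and the whole scheme of fitting concentric round annuli inside $\mathbb D$ to the $A_k$ no longer matches the topology; the proposal does not explain how the decomposition of $\mathbb D$ is adapted to this branching, which is again the heart of the matter rather than a technicality.

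The alternative route via inscribed polygons $P_n$ and piecewise-linear Schoenflies maps has the same two defects in different clothing. First, it is not explained why a PL Schoenflies homeomorphism for $P_n$ can be arranged with Lipschitz constant bounded independently of $n$ (``arranging that no source triangle is expanded'' is a wish, not a construction, and for polygons with deep fjords it is exactly the open issue). Second, even granting uniform bounds, a subsequential limit of Lipschitz homeomorphisms is only a Lipschitz map: with no uniform modulus of continuity for the inverses (which would be a bi-Lipschitz-type hypothesis you correctly say is unavailable), injectivity can be lost in the limit --- this is the same loss-of-injectivity phenomenon recalled in the introduction of this paper for energy-minimizing sequences. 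So both implementations reduce the theorem to an unproved claim that is equivalent in difficulty to the theorem itself; the correct statement is true, but for the proof one really needs the machinery of~\cite{Kovalev}.
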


For an arbitrary $\Y$, the problem has no solution for $1 \le p \le  2$. Indeed,  Zhang~\cite{Zh}
constructed  a Jordan domain $\Y$ and a boundary homeomorphism
which admits a continuous $\W^{1,2}$-extension to $\overline{\mathbb D}$ but does not even admit a $\W^{1,1}$-homeomorphic extension to $\overline{\mathbb D}$. The boundary of the domain $\Y$ in
question is not rectifiable but does have Hausdorff dimension one. His construction relies on the Riemann Mapping Theorem and therefore   works only for $p\le 2$.  We  show that even replacing $2$ by any power $p<\infty$ does not guarantee  the existence of a homeomorphic extension in any Sobolev class.

\begin{theorem}\label{ex:zhangp} For $1\leq p<\infty$ there exists a Jordan domain $\Y$ and a homeomorphism $\varphi \colon \partial \mathbb D \onto \partial \Y$ which satisfies the $p$-Douglas condition (i.e. $\varphi$ admits a continuous $\W^{1,p}$-Sobolev extension)
 but does not admit a homeomorphic extension $h \colon \C \onto \C$ in the Sobolev class $\W^{1,1}_{\loc} (\C, \C)$.
 \end{theorem}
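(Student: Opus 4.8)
The plan is to construct, for each fixed $p\in[1,\infty)$, a bounded Jordan domain $\Y=\Y_p$ together with a parametrization $\varphi\colon\partial\mathbb D\onto\partial\Y$, where $\partial\Y$ carries a self-similar \emph{cascade of fjords} --- thin slit-like protrusions organized into generations that are present at every scale and everywhere along the boundary --- with the geometric parameters of the cascade (the depths $a_k$, the number $N_k$ and the opening widths $\ell_k$ of the generation-$k$ fjords, and the relative thinness of each fjord in $\C$) chosen as functions of $p$. The whole difficulty lies in the fact that the two demands on the cascade point in opposite directions and must be reconciled: to satisfy the $p$-Douglas condition \eqref{eq:pDouglas} the cascade must be \emph{tame enough} (roughly, $\sum_k N_k a_k^{p}\ell_k^{2-p}<\infty$, i.e.\ $\varphi$ must stay in the Besov space $B^{1-1/p}_{p,p}(\partial\mathbb D)$), while to rule out a $\W^{1,1}_{\loc}$ homeomorphic extension it must be \emph{wild enough} that any homeomorphic filling is forced to spend a non-summable amount of transversal length; for larger $p$ the first constraint is severe, so the wildness must be spread ever more finely. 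The map $\varphi$ sends the boundary arc beneath each fjord onto that fjord's boundary at essentially constant speed, and the fjords are made thin enough in $\C$ that $\Y$ is a genuine bounded Jordan domain with $a_k\to0$.

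For the positive direction, when $p=1$ the continuous $\W^{1,1}$-extension exists automatically, since $\varphi\in\mathscr L^1(\partial\mathbb D)$ and Gagliardo's theorem applies. For $1<p<\infty$ I would \emph{not} invoke any analytic extension operator --- the Riemann map is only $\W^{1,2}$, which is precisely why Zhang's construction cannot pass $p=2$ --- but instead build an explicit continuous extension $u\colon\overline{\mathbb D}\to\C$ by \emph{folding}. Away from thin collars attached to the fjord arcs, $u$ is a mild modification of a conformal model onto the fjord-free core of $\Y$; inside the collar over a generation-$k$ fjord, $u$ folds radially back and forth so that its trace traces the long fjord boundary while its image stays confined to the thin fjord. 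A fold over a generation-$k$ fjord of depth $a_k$ inside a collar of width $\sim\ell_k$ costs $\int|Du|^p\lesssim(a_k/\ell_k)^{p}\cdot\ell_k^{2}=a_k^{p}\ell_k^{2-p}$, and summing over the $N_k$ generation-$k$ fjords and over $k$ reproduces exactly the series controlling the double integral in \eqref{eq:pDouglas}, which the choice of parameters makes converge. Folding is essential here: an injective map has no way to hide the infinite length of $\partial\Y$ inside a thin collar, and that is precisely the mechanism exploited below.

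For the negative direction, suppose $h\colon\C\onto\C$ were a homeomorphism in $\W^{1,1}_{\loc}$ agreeing with $\varphi$ on $\partial\mathbb D$, and write $h$ also for $h\colon\overline{\mathbb D}\onto\overline{\Y}$. I would use the coarea inequality $\int_{\mathbb D}|Dh|\ge\int_0^1\operatorname{length}(h(\{|z|=r\}))\,dr$, together with two consequences of injectivity and of continuity up to the boundary: (a) each $h(\{|z|=r\})$ is a Jordan curve lying within $\omega_h(1-r)$ of $\partial\Y$, so by lower semicontinuity of length its length tends to $\infty$ as $r\to1$; and (b) carrying the tip of a generation-$k$ fjord through the crosscut bounding its injective preimage forces a quantitative lower bound on the oscillation of $h$ near the corresponding arc, hence on $\omega_h$ --- uniformly over $\partial\mathbb D$, because the cascade is present at every scale everywhere. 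The crux is to leverage (a)--(b) so that the cascade forces the blow-up in (a) to occur at rate at least $c/(1-r)$, whence $\int_0^1\operatorname{length}(h(\{|z|=r\}))\,dr=\infty$ and $h\notin\W^{1,1}$. This is where the parameters $N_k,\ell_k,a_k$ must be chosen just right, balancing the case in which $\omega_h$ is comparable to the threshold from (b) --- so the image circles are forced to wind through many fjords and are long --- against the case in which $\omega_h$ is much larger than that threshold on a non-negligible set of the boundary --- so that $h$ already expands by a definite amount at every small scale near many disjoint boundary arcs, and the energy diverges directly.

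The main obstacle is exactly this last step. The homeomorphic extension is adversarial: it may route cheaply through any isolated thin feature and may adopt whatever modulus of continuity it likes, so the cascade must be engineered so that \emph{no} admissible $h$ escapes the non-summable transversal cost, and yet the very same cascade must keep $\varphi$ inside $B^{1-1/p}_{p,p}(\partial\mathbb D)$; threading this needle --- in particular finding the regime of $(N_k,\ell_k,a_k)$ that survives both constraints, which is delicate precisely because $a_k\to0$ is forced by Jordan-ness --- is the heart of the proof. A secondary difficulty is purely technical: at the $\W^{1,1}$ level, the coarea inequality, absolute continuity on almost every circle, lower semicontinuity of length along the curves $h(\{|z|=r\})$, and the tip-carrying oscillation estimate all have to be justified for maps that are merely Sobolev homeomorphisms rather than, say, mappings of finite distortion.
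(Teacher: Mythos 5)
There is a genuine gap, and it sits exactly where you locate it yourself: the negative direction is only described, not proved, and the mechanism you propose is unlikely to close it. With a cascade of shallow fjords ($a_k\to 0$ everywhere along the boundary) the \emph{internal} geometry of $\Y$ stays tame: two boundary points on opposite sides of a fjord mouth are at internal distance $\lesssim a_k$, so an adversarial homeomorphism can resolve each generation-$k$ fjord at its own scale, at a $\W^{1,1}$ cost of order $a_k\ell_k$ per fjord, i.e.\ $\sum_k N_k a_k\ell_k$ in total. This can easily converge even when the boundary length $\sum_k N_k a_k$ diverges, so nothing in your construction rules out a homeomorphic $\W^{1,1}$ extension; infinite length of $\partial\Y$ by itself is never an obstruction. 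Your coarea route does not repair this: uniform convergence of $h(\{|z|=r\})$ to $\partial\Y$ plus lower semicontinuity only gives $\operatorname{length}\bigl(h(\{|z|=r\})\bigr)\to\infty$ with \emph{no rate}, and the rate is governed by $\omega_h$, which the extension chooses; it can delay the blow-up so that $\int_0^1\operatorname{length}\bigl(h(\{|z|=r\})\bigr)\,dr<\infty$. The dichotomy in (b) (``$\omega_h$ comparable to the threshold'' versus ``much larger on a non-negligible set'') is precisely the unproved heart, and in the second branch a large modulus of continuity does not by itself cost non-summable $\W^{1,1}$ energy unless you produce many disjoint regions each carrying a definite amount of stretching --- which is the estimate you never supply and which your geometry, lacking any divergent internal distances, does not obviously force.

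The paper's construction supplies exactly the feature your cascade is missing: divergent internal distance concentrated at one boundary point, paired with boundary arcs of definite relative size sent there. The domain is a chain of snowflake tubes, each scaled by $1/4$, arranged so that reaching the $j$-th tube from a fixed boundary segment $L_1$ requires a path of length at least $4^j$ inside $\Y$, while $\varphi$ maps arcs $a_j\subset\partial\mathbb D$ with $|a_j|\asymp 4^{-j}$ onto the $j$-th tube. Then on each of the infinitely many disjoint quasi-rectangles $R_j$ (width $\asymp 1$, height $\asymp 4^{-j}$) joining $a_j$ to a fixed arc, almost every horizontal segment maps to a curve of length $\gtrsim 4^j$, so $\int_{R_j}|Dh|\gtrsim 4^{-j}\cdot 4^j\asymp 1$ for every candidate extension, and summing over $j$ kills $\W^{1,1}_{\loc}$ regularity --- a scale-by-scale, uniform lower bound requiring no modulus-of-continuity dichotomy and no rate for the circle lengths. (For the positive direction the paper does not fold either: the two long boundary arcs are Rohde-type snowflakes, $\varphi$ is quasisymmetric and H\"older with exponent tending to $1$ as the snowflake parameter $\tau\downarrow 1/4$, and H\"older continuity with exponent close enough to $1$ gives the $p$-Douglas condition; your folding computation is plausible, but it is the absence of an internal-distance blow-up in your target, and the absence of the balancing estimates you defer, that makes the proposal fall short.)
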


 \subsection{Target with Lipschitz boundary}
To begin with a classical result, notice that the theory of  of Rad\'o~\cite{Ra}, Kneser~\cite{Kn} and Choquet~\cite{Ch}  (RKC) solves the  Sobolev Jordan-Sch\"onflies  Problem when the target $\Y$ is convex and $p=2$. Indeed, first the RKC-theorem asserts among other things that if $\Y \subset \C$ is a bounded convex domain then the harmonic extension of a homeomorphism  $\varphi \colon \partial \mathbb D \onto \partial \Y$ is a diffeomorphism from $\mathbb D$ onto $\Y$. Second, since  the harmonic extension   minimizes the Dirichlet energy among all continuous Sobolev
mappings in $\W^{1,2}(\mathbb D, \C)$ equal to $\varphi$ on $\partial \mathbb D$ the harmonic extension belongs to $\W^{1,2} (\mathbb D, \mathbb C)$ if and only if  the map $\varphi $ admits a  finite Dirichlet energy extension to $\mathbb D$. If the target $\Y$, however, is not convex one can always construct  a boundary homeomorphism  $\varphi \colon \partial \mathbb D \onto \partial  \mathbb Y$ whose harmonic extension fails to be injective, see~\cite{AS, Kn}. Thus the Sobolev Jordan-Sch\"onflies problem already becomes nontrivial for a non-convex target and $p = 2$. For a domain $\yy$ with Lipschitz boundary it should however be noted that there exists a global bi-Lipschitz change of variables $\Phi \colon \C \onto \C$ for which $\Phi (\Y)$ is the unit disk. Naturally, the problem is invariant under such a global bi-Lipschitz change of variables. The above argument can be broadened to cover the entire range of $p<\infty$ and give a positive answer to the problem when $\Y$ has Lipschitz boundary. In fact, for $1\le p <2$, the classical RKC-theorem applies as the harmonic extension of any boundary homeomorphism $\varphi \colon \partial \mathbb D \onto \partial \mathbb D$ lies in the Sobolev space  $\W^{1,p} (\mathbb D , \C)$ for $p<2$, see~\cite{IMS, Ve}. For $p \in (2, \infty)$ we  may apply a $p$-harmonic variant of the RKC-theorem~\cite{AS}.   Here we also rely on  the fact that the variational formulation coincides with the classical formulation of the Dirichlet problem in any Jordan domain, see~\cite[\S 2.2]{KOext} for more details. 
Moving beyond targets with Lipschitz boundary, there however is no easy solution to the problem.

\subsection{Target with quasicircle  boundary} A {\it quasicircle} is the image of the unit circle under a quasiconformal self-homeomorphism
of $\C$.   The notion was  introduced independently by Pfluger~\cite{Pf} and Tienari~\cite{Ti}.  Recall that a $\W^{1,2}$-homeomorphism $f \colon \C \onto \C$ is quasiconformal if there is a constant $1 \le K < \infty$ such that
\[\abs{Df(x)}^2 \le K \det Df(x) \qquad \textnormal{a.e. in } \C \, . \]
Hereafter $\abs{\cdot}$ stands for the operator norm of matrices.
 In particular a quasicircle is a Jordan curve. The interior of a quasicircle is called a {\it quasidisk}. Quasidisks have been studied intensively for many years because of their exceptional functional theoretic properties, relationships with Teichm\"uller theory and Kleinian groups and interesting applications in complex dynamics, see~\cite{GeMo} for a  survey.  
Complex dynamics ({\it Julia sets} of rational maps, limit sets of quasi-Fuchsian groups) provide a  rich  source  of  examples  of  quasicircles  with  Hausdorff dimension greater than one. The Hausdorff dimension  of quasicircles may actually take any value in the interval $[1,2)$, see~\cite{GeVa}.   Perhaps the best know geometric characterization for a quasicircle is the  {\it Ahlfors' condition}~\cite{Ahre}.
It says that a planar Jordan curve $\mathcal C$  is a quasicircle if and only if   there is a constant $1 \leqslant \gamma\ <\infty$ such that for each pair of distinct points $a, b \in \mathcal C$ we have
\begin{equation}\label{eq:ah} \diam \Gamma   \le \gamma 
\abs{a-b}\end{equation}
where $\Gamma$ is the component of $\mathcal C \setminus \{a, b\}$ with smallest diameter. Equivalently~\eqref{eq:ah} can be given in terms of a reverse triangle inequality for three points: there is a constant $C$ such that  if a point $c\in \Gamma$, then
\begin{equation}\label{eq:threept}
\abs{a-c} + \abs{c-b}\le C\, \abs{a-b} \, . 
\end{equation}
This property is also called {\it bounded turning} condition, see~\cite{LVb}.
We proved with  Koskela~\cite{KKO} that if $1\le p < 2$ and $\partial \Y$ is a quasicircle, then any homeomorphism $\varphi \colon \partial \mathbb D \onto \partial \Y$ admits a homeomorphic extension $h \colon \C \onto \C$ in the Sobolev class $\W^{1,p}_{\loc} (\C , \C)$.  In particular, an arbitrary boundary homeomorphism $\varphi \colon \partial \mathbb D \onto \partial \Y$ satisfies the $p$-Douglas condition for $p<2$. The main point in our argument is a weighted homeomorphic extension theorem of the unit disk onto itself, see~\cite{KKO} for details.  
 This approach heavily relies on the fact that $p<2$ and cannot be extended to cover even the case $p=2$. Here we give a new direct way to construct Sobolev homeomorphic extensions of $\varphi \colon\partial  \mathbb D \onto \partial \Y$
and cover the entire  range of Sobolev exponents.
\begin{theorem}\label{thm:quasidisk}
Let $1\le p < \infty$ and $\partial \Y$ be a quasicircle. If a homeomorphism $\varphi \colon \partial \mathbb D \onto \partial \Y$ satisfies the $p$-Douglas condition (i.e. admits a Sobolev extension to $\mathbb D$ in $\W^{1,p} (\mathbb D, \C)$), then it admits a homeomorphic extension $h \colon \C \onto \C$ in the Sobolev class $\W_{\loc}^{1,p} (\C, \C)$.
\end{theorem}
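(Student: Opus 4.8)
For $1<p<\infty$ the $p$-Douglas condition~\eqref{eq:pDouglas} is equivalent to $\varphi$ admitting a continuous $\W^{1,p}(\mathbb D,\C)$-extension, while for $p=1$ it is just $\varphi\in\mathscr L^{1}(\partial\mathbb D)$, automatic and already covered by~\cite{KKO}; so the new content is $2\le p<\infty$. Since postcomposition with a quasiconformal but non-bi-Lipschitz map destroys $\W^{1,p}$-regularity when $p\ne 2$, one cannot reduce to a disk target through the Riemann map, and the plan is instead to build $h$ by hand. It suffices to produce a homeomorphic $\W^{1,p}$-extension $h\colon\overline{A}\onto\overline{V}$ of $\varphi$, where $A=\{\tfrac12<\abs{z}<1\}$ and $V\subset\overline{\Y}$ is a closed topological collar of $\partial\Y$ bounded from inside by a polygonal Jordan curve $\gamma_{0}$. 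Indeed: over the core $\{\abs{z}\le\tfrac12\}$ one extends $h$ homeomorphically onto $\overline{\Y}\setminus V$ with Lipschitz boundary data on $\{\abs{z}=\tfrac12\}$, which by Theorem~\ref{thm:lip} (after a bi-Lipschitz change of variables turning $\gamma_{0}$ into a circle) can be achieved by a Lipschitz, hence $\W^{1,p}$, map; over the exterior $\C\setminus\overline{\mathbb D}$ one repeats the collar construction near $\partial\mathbb D$ — the three-point condition~\eqref{eq:threept} is symmetric with respect to the two complementary components of $\partial\Y$ — and interpolates crudely (piecewise affinely) to a map onto the complement of a large disk far away, contributing only a finite amount to the $\W^{1,p}$-norm on each compact set. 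Gluing the three pieces along the Jordan curves $\{\abs{z}=\tfrac12\}$ and $\partial\mathbb D$, where the definitions agree and each piece is a local homeomorphism, yields $h\colon\C\onto\C$ in $\W^{1,p}_{\loc}(\C,\C)$.

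\emph{The collar construction.} First I would fix a decomposition of $\partial\Y$ into successive generations of sub-arcs $\Gamma_{j,k}$ whose diameters halve from one generation to the next, $\diam\Gamma_{j,k}\approx 2^{-k}\diam\partial\Y$; bounded turning makes such a decomposition well-behaved, each $\Gamma_{j,k}$ being comparable to its chord and meeting a disk of its own scale essentially in a single arc. Pulling back under $\varphi$ gives a laminar family of arcs $I_{j,k}=\varphi^{-1}(\Gamma_{j,k})\subset\partial\mathbb D$, possibly very unbalanced. Next I would build, inside $\Y$, a collar $V$ of $\partial\Y$ decomposed into closed cells $V_{j,k}$ with $V_{j,k}\cap\partial\Y=\Gamma_{j,k}$, each uniformly bi-Lipschitz to a square of diameter $\approx\diam\Gamma_{j,k}$ that is not too thin, nested so that $V_{j,k}$ sits between $\Gamma_{j,k}$ and the cells of the next generation below it: here the John and linear-local-connectivity properties of the quasidisk control the cell shapes, while bounded turning keeps cells over disjoint arcs disjoint — this is exactly the obstruction that fails for cusp targets (cf.\ Example~\ref{ex:cuspp=2}). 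On the disk side I would decompose $A$ into a cell $Q_{j,k}$ of size $\approx\abs{I_{j,k}}$ sitting over $I_{j,k}$ at depth $\approx\abs{I_{j,k}}$, together with "filler" Whitney squares over $I_{j,k}$ filling the gap in depth between the child cells and $Q_{j,k}$ when $I_{j,k}$ is much shorter than its parent $\hat I_{j,k}$ (the number of filler squares over $I_{j,k}$ being $\approx\log\bigl(\abs{\hat I_{j,k}}/\abs{I_{j,k}}\bigr)$). Finally I would define $h$ on each $Q_{j,k}$ as a uniformly bi-Lipschitz reparametrization onto $V_{j,k}$ respecting the prescribed correspondence of boundary arcs, and on each filler square as a similarly controlled map into the corresponding transitional region of $V$; the edge matchings make $h$ continuous, the nesting and cell structure make $h$ a homeomorphism of $\overline{A}$ onto $\overline{V}$, and $h=\varphi$ on $\partial\mathbb D$.

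\emph{The Sobolev estimate.} On $Q_{j,k}$ one has $\abs{Dh}\lesssim\diam V_{j,k}/\diam Q_{j,k}\approx 2^{-k}\diam\partial\Y/\abs{I_{j,k}}$, so $\int_{Q_{j,k}}\abs{Dh}^{p}\lesssim\abs{I_{j,k}}^{2-p}\bigl(2^{-k}\diam\partial\Y\bigr)^{p}$, while a filler square of side $s$ over $I_{j,k}$ contributes $\lesssim s^{2-p}\bigl(2^{-k}\diam\partial\Y\bigr)^{p}$. Summing over the geometrically spaced filler sides and over all $j,k$, and using bounded turning to replace $2^{-k}\diam\partial\Y\approx\diam\Gamma_{j,k}\approx\diam\varphi(I_{j,k})\approx\abs{\varphi(a_{j,k})-\varphi(b_{j,k})}$ with $a_{j,k},b_{j,k}$ the endpoints of $I_{j,k}$, the total should reduce to a quantity comparable to the dyadic/Littlewood--Paley form of the trace norm $\norm{\varphi}^{p}_{B^{1-1/p}_{p,p}(\partial\mathbb D)}$, hence to the $p$-Douglas integral~\eqref{eq:pDouglas}, which is finite by hypothesis. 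Together with the reduction this gives $h\in\W^{1,p}_{\loc}(\C,\C)$.

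\emph{Main obstacle.} I expect the heart of the proof to be the collar construction together with this estimate, the genuinely delicate point — most acute at the borderline exponent $p=2$, where $B^{1/2}_{2,2}$ fails to embed in $C$ and no lossy bound is affordable — being to reconcile two competing demands: efficiency of the energy estimate (each cell and each filler must contribute an honest local Douglas quantity summing to~\eqref{eq:pDouglas}, not a worst-case oscillation, which forces the two-scale, filler-based decomposition above) against the rigidity needed to keep the cells $V_{j,k}$ from overlapping inside $\Y$ so that $h$ remains injective. It is precisely here that the full strength of the quasicircle hypothesis — bounded turning, the John property, linear local connectivity, and the absence of cusps — is needed, and bounding the cumulative cost of the filler squares by the $p$-Douglas integral is the technical crux.
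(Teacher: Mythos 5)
Your overall architecture (dyadic decomposition, cell-by-cell maps with Lipschitz-type control, energy summed against a dyadic form of the Douglas integral, bounded turning to convert arc diameters into endpoint distances) is in the right spirit, but the construction as stated has a genuine gap at its core. You require $V_{j,k}\cap\partial\Y=\Gamma_{j,k}$, each $V_{j,k}$ \emph{uniformly bi-Lipschitz to a square}, and $h|_{Q_{j,k}}$ a \emph{uniformly bi-Lipschitz reparametrization onto $V_{j,k}$ respecting the boundary correspondence}. For the quasicircles that matter here (say a Koch-type snowflake) every subarc $\Gamma_{j,k}$ is non-rectifiable, so $\partial V_{j,k}$ is non-rectifiable and $V_{j,k}$ cannot be a bi-Lipschitz image of a square; worse, a (rescaled) bi-Lipschitz cell map that equals $\varphi$ on $I_{j,k}$ would make $\varphi|_{I_{j,k}}$ bi-Lipschitz onto $\Gamma_{j,k}$, which is impossible for a fractal arc and in any case not implied by the $p$-Douglas condition. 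Once you relax the cell maps to ``Sobolev homeomorphisms with boundary values $\varphi$ on $I_{j,k}$ and controlled norm,'' you are facing the original extension problem again, merely localized. The paper's construction is designed precisely to avoid this: the target cells are bounded \emph{only} by crosscuts (hyperbolic geodesics joining the endpoints of $\varphi(I_{n,j})$), meet $\partial\Y$ in finitely many points, and every cell map is an honest Lipschitz homeomorphism supplied by Theorem~\ref{thm:lip}; the boundary map $\varphi$ is attained only in the limit of the nested construction, and only the lengths $|\Gamma_{n,j}|$ enter the energy. Disjointness of these crosscuts comes for free from conformal invariance of hyperbolic geodesics, so none of the John/LLC cell geometry you invoke is needed.

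The second, related gap is that your key estimate is only asserted (``the total should reduce to the Besov/Douglas norm''). In the paper this is where the quasidisk hypothesis actually does its work: the hyperbolic segment property gives $|\Gamma_{n,j}|\le C\,d$ with $d$ the Euclidean distance of the endpoints of $\varphi(I_{n,j})$, the three-point property gives $|\varphi(x)-\varphi(y)|\ge c\,d$ for $x$ and $y$ in the two \emph{neighbouring} dyadic arcs $I_-$, $I_+$ of the same generation, and — crucially — the products $I_-\times I_+$ are pairwise disjoint over the source-dyadic family, so $\sum_n 2^{(p-2)n}\sum_j|\Gamma_{n,j}|^p$ is dominated by a single Douglas integral. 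With your target-dyadic bookkeeping the pullback arcs $I_{j,k}$ have wildly varying lengths and the corresponding local Douglas boxes are no longer disjoint across generations: when $\varphi$ compresses, a fixed pair $(x,y)$ can be charged by unboundedly many generations $k$, so you need a bounded-overlap or regrouping argument that the sketch does not supply; the same goes for the cumulative cost of the filler squares when $|I_{j,k}|\ll|\hat I_{j,k}|$. Decomposing dyadically on the source side, as the paper does, eliminates both the unbalanced arcs and the filler machinery (its only degeneration, the angles $\alpha_n\to0$ of the disk-side cells, is handled by small buffer pieces of prescribed scale $d_n$). Finally, your exterior step is fine in principle, but the paper's quasiconformal (bi-Lipschitz) reflection in the quasicircle is the cleaner route and avoids the ad hoc interpolation near infinity.
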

Note that if $p<2$, then any boundary homeomorhism onto a quasicircle satisfies the $p$-Douglas condition.

\subsection{Target with piecewise smooth boundary}
The most standard examples of singular
boundaries which fail to satisfy the Ahlfors' condition~\eqref{eq:ah} are cusps.
Let $\Omega_\beta$ be an  inward cusp domain where the cusp is formed by the graph of the function $x \mapsto \abs{x}^\beta$ near $0$,  $\beta >1$, and a smooth curve, see Figure~\ref{fig:innercusp}.
\begin{figure}[htbp]
\centering
\includegraphics[width=0.25\textwidth]
{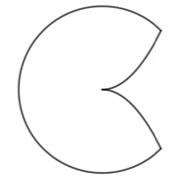}
\caption{An  inward  cusp domain, $\Omega_\beta$.}
\label{fig:innercusp}
\end{figure}
For a homeomorphism $\varphi \colon \partial \mathbb D \onto \partial \Omega_\beta$ there exists a homeomorphic extension $h 
\colon \C \onto \C$ which belongs to the Sobolev class $\W_{\loc}^{1,p} (\C , \C)$ for every $p<2$. This follows from our solution~\cite{KOext} to the Sobolev Jordan-Sch\"onflies  Problem when $p<2$ and the boundary of target domain being rectifiable. 
If $p=2$, however, the problem fails to have a solution for every power-type cusp target, independently of the sharpness of cusp.
\begin{example}\label{ex:cuspp=2}
 Let $\Omega_\beta$ be an  inward cusp domain for some $\beta>1$. Then  there is a boundary homeomorphism $\varphi \colon \partial \mathbb D \onto \partial \Omega_\beta$ which satisfies the Douglas condition (and hence admits a continuous  $\W^{1,2}$--extension) but does not admit a homeomorphic extension $h \colon \overline{\mathbb D} \onto \overline{\Omega_\beta}$ in the Sobolev class  $\W^{1,2} (\mathbb D, \C)$.
\end{example}
Surprisingly, $p=2$ is the only case when the problem does not have a solution. Indeed, we have
\begin{theorem}
 Let $\Omega_\beta$ be a cusp domain with $\beta>1$. If a homeomorphism  $\varphi \colon \partial \mathbb D \onto \partial \Omega_\beta$ satisfies the $p$-Douglas condition  for some $p>2$, then a homeomorphic extension $h\colon \C \onto \C$ of $\varphi$ lies in $\W_{\loc}^{1,p} (\C , \C)$.
\end{theorem}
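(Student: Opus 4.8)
The plan is to build the extension essentially by hand: to splice a map constructed by an explicit \emph{foliation} near the cusp vertex onto the extensions already available when the boundary is smooth (a quasicircle). The single feature of the $p$-Douglas condition that is special to $p>2$, and that the construction will exploit, is this: if $\varphi$ has a continuous $\W^{1,p}(\mathbb D)$-extension $u$ and $p>2$, then $u\in C^{0,\alpha}(\overline{\mathbb D})$ with $\alpha:=1-2/p>0$ by Morrey's embedding, so $\varphi\in C^{0,\alpha}(\partial\mathbb D)$. In particular $\varphi$ reaches the cusp vertex $w_\circ$ slowly: $\diam\varphi^{-1}\bigl(\partial\Omega_\beta\cap\overline{B(w_\circ,t)}\bigr)\gtrsim t^{1/\alpha}$ for small $t>0$. (For $p=2$ this gives $\alpha=0$ and is vacuous, in accordance with Example~\ref{ex:cuspp=2}.)

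First I would normalize. By a global bi-Lipschitz change of variables of $\C$ — under which the problem, the condition \eqref{eq:pDouglas}, and the class $\W^{1,p}_{\loc}(\C,\C)$ are all invariant — I may assume $w_\circ=0$, that near $0$ the curve $\partial\Omega_\beta$ is the union of the cusp arc $\gamma(t)=(t,t^\beta)$ and the smooth arc forming the cusp with it, and that $\partial\Omega_\beta$ is a straight segment outside a fixed ball $B(0,\delta)$. Let $\mathcal C$ be the thin cuspidal region cut off by those two arcs — a component of $\C\setminus\partial\Omega_\beta$ near $0$, of width $\asymp t^\beta$ at distance $t$ from the vertex — and let $A\subset\partial\mathbb D$ be the arc that $\varphi$ carries onto $\partial\mathcal C\cap B(0,\delta)$, so that $\varphi$ runs from one side of $\mathcal C$ down to the vertex and back up the other. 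The core is a local extension lemma: there is a homeomorphism $h$ of a half-disc model region $H\subset\C\setminus\partial\mathbb D$ (lying on whichever side of $\partial\mathbb D$ is mapped onto $\mathcal C$, with $A$ on its boundary) onto $\mathcal C$, equal to $\varphi$ on $A$ and lying in $\W^{1,p}(H,\C)$. I would build $h$ by foliating $\mathcal C$ by the curves $x\mapsto(x,\sigma x^\beta)$, $\sigma$ in a parameter interval — they all issue from the vertex and sweep out $\mathcal C$ — foliating $H$ by the rays issuing from the point $a_\circ\in\partial H$ with $\varphi(a_\circ)=0$, and letting $h$ carry the ray at ``angle $\theta$'' onto the leaf $\sigma=\sigma(\theta)$; the constraint $h=\varphi$ on $A$ fixes, through the reparametrizing functions $\tau,\tilde\tau$ of $\varphi$ along the two arcs, the behaviour of $h$ along the two boundary rays, and one interpolates between them in the interior, mollifying in the radial variable at a scale comparable to the distance to $\partial\mathbb D$ so that $h$ is genuinely Sobolev inside even though $\varphi$ need only be Hölder on the boundary.

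A direct computation in these polar-type coordinates yields, schematically,
\[
\abs{Dh(r,\theta)}\;\lesssim\;\max\bigl(\tau'(r),\tilde\tau'(r)\bigr)\;+\;\tfrac1r\,\bigl(\abs{\tau(r)-\tilde\tau(r)}+\max(\tau(r),\tilde\tau(r))^{\beta}\bigr),
\]
so that, since $\int_H(\cdot)\asymp\int_0^\delta(\cdot)\,r\,\dtext r$,
\[
\int_H\abs{Dh}^p\;\lesssim\;\int_0^\delta\!\bigl(\tau'(r)^p+\tilde\tau'(r)^p\bigr)r\,\dtext r\;+\;\int_0^\delta\!\frac{\abs{\tau(r)-\tilde\tau(r)}^p}{r^{p-1}}\,\dtext r\;+\;\int_0^\delta\!\frac{\max(\tau(r),\tilde\tau(r))^{\beta p}}{r^{p-1}}\,\dtext r.
\]
The third integral is the decisive one, and it is exactly where $p>2$ and $\beta>1$ combine: using the Hölder bound $\tau(r),\tilde\tau(r)\le Cr^{\alpha}$, $\alpha=1-2/p$, it is dominated by $\int_0^\delta r^{\alpha\beta p-(p-1)}\,\dtext r$, which converges precisely when $\alpha\beta p>p-2$, i.e.\ when $\beta>1$. (For $p=2$, $\alpha=0$ and nothing saves this term — as it must not, by Example~\ref{ex:cuspp=2}.) The first integral is controlled by the $p$-Douglas seminorm of $\varphi|_A$ via a fractional Hardy inequality of the type $\int_0^\delta\abs{\psi'(r)}^p r\,\dtext r\lesssim[\psi]_{\W^{1-1/p,p}}^p$ for $\psi$ vanishing at $0$, and the second by the ``opposite-sides'' part of the double integral in \eqref{eq:pDouglas}, since $\abs{\varphi(x)-\varphi(y)}\gtrsim\abs{\tau(r)-\tilde\tau(r)}$ for $x,y\in A$ placed symmetrically about $a_\circ$ at distance $r$. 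Hence $h\in\W^{1,p}(H,\C)$.

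Finally I would assemble the global map: outside $H$ the target boundary is a quasicircle (indeed smooth), so the remaining boundary data — a restriction of $\varphi$ together with the values of $h$ along the free boundary $\partial H\setminus A$ — extends to a $\W^{1,p}$-homeomorphism by Theorem~\ref{thm:quasidisk}, and one glues along $\partial H\setminus A$; carrying this out on both sides of $\partial\mathbb D$ and extending trivially farther out gives $h\colon\C\onto\C$ in $\W^{1,p}_{\loc}(\C,\C)$ agreeing with $\varphi$ on $\partial\mathbb D$. I expect the main obstacle to lie precisely in this assembly, together with making the two auxiliary estimates rigorous: the free boundary $\partial H\setminus A$ must be chosen so that $h$ carries data of the \emph{same} Hölder regularity as $\varphi$ near the endpoints of $A$ — naively gluing to a Lipschitz curve there would violate \eqref{eq:pDouglas} at the junction when $\varphi$ is genuinely only Hölder — so the model region, the radius $\delta$, and the interior interpolation have to be set up compatibly; and the fractional Hardy estimate must be justified for the merely monotone reparametrizations $\tau,\tilde\tau$, which can carry a singular part.
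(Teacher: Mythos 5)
The decisive gap is the inequality you lean on to control the first term of your energy decomposition: the ``fractional Hardy inequality'' $\int_0^\delta\abs{\psi'(r)}^p\,r\,\dtext r\lesssim[\psi]_{\W^{1-1/p,p}}^p$ is false, not merely delicate. Away from $r=0$ the weight is $\asymp 1$, so the inequality would assert $\W^{1-1/p,p}\subset\W^{1,p}$ on subintervals, which fails for a fractional order $1-1/p<1$; concretely, nothing in the $p$-Douglas condition prevents the reparametrizations $\tau,\tilde\tau$ from being purely singular (a Cantor-type monotone function of H\"older exponent $>1-1/p$ has finite Gagliardo seminorm), and even among smooth functions the seminorm gives no bound on $\int\abs{\psi'}^p$. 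Relatedly, your schematic bound $\abs{Dh}\lesssim\max(\tau',\tilde\tau')+\cdots$ is not the correct estimate for the map you actually define: once you mollify at scale comparable to the distance to $\partial\dd$ (which you must, precisely because $\varphi$ need not be absolutely continuous), the radial derivative involves mollified difference quotients plus terms from the variation of the mollification scale, and the honest bookkeeping has to bound that contribution by the two-variable double integral \eqref{eq:pDouglas} restricted to $A\times A$, never by $\int_0^\delta\tau'(r)^p\,r\,\dtext r$. Without redoing the construction so that only Gagliardo-controlled quantities appear, the dominant term of your extension's energy is simply not estimated. This is exactly the difficulty the paper's argument is organized around: instead of transporting raw boundary derivatives, it builds the collar map (the ``boundary diffusion'') as a coordinatewise $p$-harmonic extension on a model triangle, so its energy is controlled by checking the $p$-Douglas condition on $\partial S$; the only nontrivial piece of that check is Lemma~\ref{lem21}, proved by a dyadic argument from \eqref{eq:pDouglas}, and the map is then corrected to be injective on the rays and used to replace $\varphi$ by Lipschitz data, after which Theorem~\ref{thm:lip} finishes.

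The parts of your plan that concern the cusp geometry are sound, and one of them is a genuine simplification for this particular statement: Morrey gives $\alpha=1-2/p$, and the width term $\int_0^\delta r^{\alpha\beta p-(p-1)}\dtext r$ converges exactly because $\beta>1$ (and degenerates at $p=2$, consistent with Example~\ref{ex:cuspp=2}); note, however, that H\"older continuity alone yields only the borderline exponent $-1$ in the situation of Lemma~\ref{lem21}, so this shortcut does not reach the general piecewise-smooth case of Theorem~\ref{thm:mainp>2}, from which the paper deduces the cusp statement. The cross term is controlled by the opposite-arc part of the Douglas integral after a routine dyadic/monotonicity argument, and the final gluing is plausible though, as you note, the junction regularity must be arranged; the paper sidesteps that issue by regularizing to Lipschitz boundary data and invoking Theorem~\ref{thm:lip} rather than Theorem~\ref{thm:quasidisk}. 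To repair your proof you must either re-derive the energy estimate for the mollified foliation map in terms of \eqref{eq:pDouglas} (essentially a homeomorphic version of the trace-extension bound), or follow the paper and let an energy-minimal ($p$-harmonic) extension produce the collar map, proving injectivity separately.
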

We obtain this  as a corollary of our next result.
\begin{theorem}\label{thm:mainp>2}
Let $\Y$ be a domain with piecewise smooth boundary. Then a homeomorphism  $\varphi \colon \partial \mathbb D \onto \partial \Y$ which   satisfies the $p$-Douglas condition  for some $p \not =2$ admits  a homeomorphic extension $h\colon \C \onto \C$  in $\W_{\loc}^{1,p} (\C , \C)$.
\end{theorem}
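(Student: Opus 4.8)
The plan is to reduce the general piecewise-smooth target to a small number of model singularities and handle each by an explicit local surgery. First I would observe that away from the finitely many corner points of $\partial\Y$ the boundary is a smooth arc, and near such a smooth arc $\partial\Y$ locally satisfies the Ahlfors three-point condition \eqref{eq:threept}; so if $\partial\Y$ had \emph{no} singular points we would already be done by Theorem~\ref{thm:quasidisk}. Hence the whole difficulty is concentrated in neighborhoods of the singular boundary points, which for a piecewise smooth curve come in exactly two flavors: (i) a \emph{corner} where two smooth arcs meet at a nonzero interior angle $\theta\in(0,2\pi)$, $\theta\neq\pi$ (the quasicircle/bounded-turning case, with $\theta=0$ and $\theta=2\pi$ excluded by that hypothesis but recoverable as limiting cusps), and (ii) a \emph{power-type cusp} of the form $\{x^\beta \le y \le C x\}$ (inward) or its outward mirror image. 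The strategy is: cover $\overline\Y$ by finitely many coordinate charts, one around each singular point (where the chart straightens the two adjacent arcs to line segments, turning the piece of $\Y$ into a model sector or model cusp) and one ``bulk'' chart near the smooth part; build the extension $h$ chart by chart and glue.

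Second, in the model corner chart I would use a bi-Lipschitz change of variables that opens or closes the angle: the map $z\mapsto z^{\alpha}$ (in polar form $re^{i\vartheta}\mapsto r^{\alpha}e^{i\alpha\vartheta}$) sends a sector of opening $\theta$ to a half-plane, and crucially this map and its inverse distort $\W^{1,p}$ norms in a controlled way except when $p$ sits at the critical exponent associated to the angle. This is exactly where $p\neq 2$ enters: for the full half-disk the critical exponent of the conformal straightening is $p=2$, and away from $p=2$ there is room to absorb the $r^{\alpha-1}$-type Jacobian weight into the $\W^{1,p}$ integrability coming from the $p$-Douglas hypothesis. Concretely, I would first extend $\varphi$ to $\overline{\mathbb D}$ as a continuous $\W^{1,p}$ map $\Phi$ (which exists by the $p$-Douglas condition), then precompose and postcompose with the straightening maps so that the boundary data becomes a boundary homeomorphism onto a (locally) smooth/quasiconvex model, apply Theorem~\ref{thm:quasidisk} there, and pull back. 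For the cusp chart I would instead uncurl the cusp by a map of the type $(x,y)\mapsto (x, y/x^{\beta})$ or $(x,y)\mapsto(x^{1-\beta}, \text{angular variable})$ that turns the cusp into a sector; again the Jacobian is a power of $x$, and the same ``$p\neq 2$ gives a margin'' computation shows that if $\varphi$ satisfies the $p$-Douglas condition for the cusp target with $p>2$ then the transported boundary map satisfies it for the sector, while for $p<2$ every boundary homeomorphism onto a rectifiable curve is admissible, so again Theorem~\ref{thm:quasidisk} applies. The cusp corollary in the statement just above is then the special case in which \emph{all} singular points are power-type cusps.

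Third, I would carry out the gluing. Having produced, in each chart $U_j$, a homeomorphic $\W^{1,p}_{\loc}$ extension $h_j$ of the (localized) boundary data, I would patch them into a single homeomorphism $h\colon\overline{\mathbb D}\onto\overline\Y$ using a partition-of-unity-style interpolation performed \emph{on the level of the reference disk} rather than of the maps themselves — i.e.\ decompose $\overline{\mathbb D}$ into finitely many closed Jordan subregions (one touching each boundary arc between consecutive singular points, one touching a neighborhood of each singular point, plus an interior piece), define $h$ on each subregion to agree with the relevant $h_j$ near the shared boundary, and verify that on the overlaps the two prescriptions can be homotoped to each other through homeomorphisms with uniformly bounded $\W^{1,p}$ energy. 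Since the subregions meet along smooth arcs in the interior of $\mathbb D$, there is no singularity issue there and the gluing is the same bi-Lipschitz-invariant surgery used in the Lipschitz-boundary case. Finally one extends $h$ from $\overline{\mathbb D}$ to all of $\C$ by the exterior version of Theorem~\ref{thm:quasidisk} (the complement of a Jordan domain with piecewise smooth boundary is, near infinity and away from singularities, just as good as a quasidisk exterior), giving $h\in\W^{1,p}_{\loc}(\C,\C)$.

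The main obstacle I expect is the corner case at exactly the borderline exponent: when $p=2$ the straightening map $z\mapsto z^\alpha$ is conformal-critical and no longer transports finite Dirichlet energy to finite Dirichlet energy, which is precisely why Example~\ref{ex:cuspp=2} exists and why the theorem must exclude $p=2$. So the quantitative heart of the proof is a sharp estimate showing that for $p\neq 2$ the composition with the straightening (or uncurling) map maps the class of $\W^{1,p}$ boundary extensions on the model target \emph{onto} (not merely into) the corresponding class on the disk, with the homeomorphism property preserved — in other words that the $p$-Douglas condition for a piecewise-smooth target is, chart by chart, bi-Lipschitz-plus-power equivalent to the $p$-Douglas condition for a quasidisk. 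Establishing this equivalence with all the uniformity needed for the finite gluing, and checking that the patched map is globally injective, is where the real work lies; everything else is bookkeeping on top of Theorem~\ref{thm:quasidisk}.
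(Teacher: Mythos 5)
Your route (localize at the singular boundary points, uncurl each cusp by a power-type map, transport the boundary data, invoke Theorem~\ref{thm:quasidisk} on the straightened model, and pull back by a Lipschitz map) is genuinely different from the paper's, which never straightens the target: instead it builds a ``boundary diffusion'' $H$ on a triangle as the $p$-harmonic extension of the given boundary parametrization of one smooth arc, uses it to replace the boundary data by Lipschitz data on a nearby curve inside $\Y$, and then applies Kovalev's Lipschitz extension theorem (Theorem~\ref{thm:lip}) plus an inversion $z\mapsto 1/\bar z$ for the exterior. But as written your argument has a genuine gap at exactly the point you yourself defer at the end: the claim that the $p$-Douglas condition survives the uncurling. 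The map $(x,y)\mapsto(x,yx^{1-\beta})$ is indeed Lipschitz from the sector to the cusp, so the pull-back step is fine; the problem is the other direction. Two boundary points on opposite branches of the cusp at abscissa $\sim t$ lie at Euclidean distance $\sim t^{\beta}$ before straightening and $\sim t$ after, so for pairs of circle points sent by $\varphi$ to opposite branches the transported Douglas integrand is inflated by the unbounded factor $t^{(1-\beta)p}$. Its finiteness is not a ``Jacobian margin'' computation: it is equivalent to a quantitative bound on how slowly $\varphi$ may approach the cusp tip, which in the paper is precisely Lemma~\ref{lem21} (if $f\colon[0,1]\to[0,1]$ is a homeomorphism with $f(0)=0$ satisfying the $p$-Douglas condition and $p>2$, then $\int_0^1 |f(x)|^p x^{1-p}\,dx<\infty$), proved by a dyadic decomposition and a H\"older argument that genuinely uses $p>2$ and is false at $p=2$ — that failure is exactly Example~\ref{ex:cuspp=2} and Theorem~\ref{thm:p=2}. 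Asserting this step as ``the same computation'' leaves out the quantitative heart of the theorem. Relatedly, the $z\mapsto z^{\alpha}$ discussion of corners is a red herring: at a corner of positive angle the boundary is still of bounded turning (or can be straightened bi-Lipschitzly), so no conformal straightening and no exclusion of $p=2$ is needed there; the $p=2$ obstruction lives only at tangential (angle $0$ or $2\pi$) contacts.

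There are also secondary gaps. To ``apply Theorem~\ref{thm:quasidisk} in a chart'' you must first manufacture, for each localized piece, a homeomorphism of the whole circle onto the boundary of an honest quasidisk whose restriction is the transported data and which satisfies the $p$-Douglas condition including the cross terms between the two branches and the auxiliary crosscut; this bookkeeping is where the paper spends its Step~1 and Step~3. The proposed partition-of-unity/homotopy gluing is not a construction: homeomorphisms cannot be patched by cutoffs, and one must arrange exact agreement along interior crosscuts (the paper prescribes constant-speed values there). Finally, for the inward ($2\pi$-angle) contact you would need an explicit Lipschitz map from the straightened model onto the complement of the cusp and a second verification of the transported Douglas condition on that side; all of these again funnel back to the missing Lemma~\ref{lem21}-type estimate, so the proposal as it stands does not yet prove the theorem.
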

Here and in what follows we say that a planar domain $\Omega$ has {\it piecewise smooth boundary} if $\partial \Omega = \bigcup^m_{j=1} \Gamma_j$, where each $\Gamma_j$ is a $\mathscr C^2$-regular curve. As mentioned earlier, the result of Theorem~\ref{thm:mainp>2} was already obtained in~\cite{KOext} for targets with rectifiable boundaries when $p < 2$. We do not know if Theorem~\ref{thm:mainp>2} is also true for a target $\Y$ with rectifiable boundary when $p>2$. 

Let us return to Example~\ref{ex:cuspp=2}. The key to our construction  is a careful analysis of  the modulus of continuity of the mappings in question.   Recall that,
a modulus of continuity of a mapping $f \colon \X \to \C$ is a function $\omega_f \colon [0, \infty) \to [0, \infty)$ if $\abs{f(x_1) -f(x_2)} \le \omega_f \big(\abs{x_1-x_2} \big)$ for all $x_1, x_2 \in \X$. 
For  a homeomorhism $h \in \W^{1,2}_{\loc} (\C , \C)$ we have
\begin{equation}\label{eq:modcont}
\int_0^1 \frac{[\omega_h(t)]^2}{t} \, \dtext t  <\infty \, . 
\end{equation}
In Example~\ref{ex:cuspp=2}, we construct a boundary homeomorphism $\varphi \colon \partial \mathbb D \onto \partial \Omega_\beta$ which satisfies the Douglas condition and fails to have the modulus of continuity estimate given by~\eqref{eq:modcont}. Therefore, clearly there is no homeomorphism $h \colon \C \onto \C$ in $\W^{1,2}_{\loc} (\C , \C)$ which coincides with $\varphi$ on $\partial \mathbb D$. The next result shows that the modulus of continuity  provided with~\eqref{eq:modcont} is not only necessary for a boundary homeomorphism $\varphi \colon \partial \mathbb D \onto \partial \Omega_\beta$  to have a homeomorphic extension in $\W^{1,2}_{\loc}(\C , \C)$ but also sufficient.
\begin{theorem}\label{thm:p=2}
Let $\Y$ be a domain with piecewise smooth boundary. Then  a homeomorphism  $\varphi \colon \partial \mathbb D \onto \partial \Y$ which   satisfies the Douglas condition admits  a homeomorphic extension $h\colon \C \onto \C$  in $\W_{\loc}^{1,2} (\C , \C)$ if and only if the boundary  homeomorphism  $\varphi$ has a modulus of continuity $\omega_\varphi$ which satisfies 
\begin{equation}\label{eq:modcontphi}
\int_0^1 \frac{[\omega_\varphi (t)]^2}{t} \, \dtext t  <\infty \, . 
\end{equation}
\end{theorem}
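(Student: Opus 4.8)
The necessity direction is already in hand: if $h\colon\C\onto\C$ is a homeomorphism in $\W^{1,2}_{\loc}(\C,\C)$ extending $\varphi$, then its restriction to $\overline{\mathbb D}$ has a modulus of continuity satisfying~\eqref{eq:modcont} (this is the standard oscillation-energy estimate for planar $\W^{1,2}$ maps, obtained by integrating $|Dh|^2$ over annuli and using the length–area inequality), and $\omega_h$ restricted to $\partial\mathbb D$ serves as $\omega_\varphi$. So the whole content is the sufficiency direction.

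For sufficiency I would proceed as follows. First, localize: since $\partial\Y=\bigcup_{j=1}^m\Gamma_j$ with each $\Gamma_j$ a $\mathscr C^2$-arc, the only obstructions to $\partial\Y$ being a quasicircle are the finitely many vertices where two arcs meet at an interior angle $0$ or $2\pi$, i.e.\ inward or outward cusps. Away from these vertices $\partial\Y$ is locally a quasicircle (indeed locally bi-Lipschitz to a line segment), so by Theorem~\ref{thm:quasidisk} (or rather its local/quantitative version) the boundary map can be extended homeomorphically with finite Dirichlet energy in a neighborhood of each non-cusp boundary arc, given only that $\varphi$ satisfies the Douglas condition there. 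The real work is a model extension near a single cusp. By a bi-Lipschitz change of variables (which preserves both the Dirichlet class and moduli of continuity up to constants) one reduces to a standard cusp domain $\Omega_\beta$; and by passing to the exterior via inversion, an outward cusp becomes an inward cusp, so it suffices to treat $\Omega_\beta$, $\beta>1$, with the cusp at the origin. One then builds, on a half-disk neighborhood of the cusp point, an explicit homeomorphism matching the given boundary values on the two sides of the cusp and mapping the ``entrance'' circular arc to a fixed arc, in such a way that one has good control of its energy.

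The key step—and the main obstacle—is the energy estimate for this cusp model. The mechanism is exactly the one flagged after Example~\ref{ex:cuspp=2}: near the cusp tip, the target is squeezed into a region of width $\sim r^\beta$ at distance $r$ from the tip, so a homeomorphism has to compress the angular direction severely; the natural extension behaves, in polar-type coordinates adapted to the cusp, like $(\rho,\theta)\mapsto(\text{something like }\rho^{1/\beta}\omega_\varphi(\rho),\,\beta\theta)$, and its Dirichlet integral over a dyadic annulus at radius $\rho$ is comparable to $[\omega_\varphi(\rho)]^2/\rho$ (after summing the radial and angular contributions and using that the cusp geometry forces the extra factors to cancel). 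Summing over dyadic scales yields exactly the integral $\int_0^1 [\omega_\varphi(t)]^2/t\,\dtext t$, which is finite by hypothesis~\eqref{eq:modcontphi}. Here I expect to need a genuinely careful construction—not a naive ``radial'' interpolation, which would lose a logarithmic factor—so the right model map should be chosen to equidistribute the boundary oscillation across scales, perhaps by first reparametrizing $\varphi$ on each side of the cusp by arclength and then using a Beurling–Ahlfors-type or harmonic-type interpolation in the cusp coordinates, while keeping $\omega_\varphi$ as the only input that governs the estimate.

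Finally, I would patch the finitely many local pieces—the cusp models near the vertices and the Theorem~\ref{thm:quasidisk} extensions along the smooth arcs—into a single homeomorphism $\overline{\mathbb D}\onto\overline{\Y}$ of finite Dirichlet energy, using a partition-of-unity-free gluing along arcs where adjacent pieces already agree on overlaps (or a standard ``no-gap, no-overlap'' decomposition of a collar of $\partial\mathbb D$ into finitely many curvilinear quadrilaterals, extending on each and matching on the shared edges). Since each piece lies in $\W^{1,2}$ and there are finitely many, the result lies in $\W^{1,2}(\mathbb D,\C)$; extending by the identity (suitably modified) to all of $\C$ and invoking the outer cusp $=$ inner cusp reduction on the unbounded complementary component gives $h\in\W^{1,2}_{\loc}(\C,\C)$, completing the proof.
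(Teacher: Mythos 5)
Your necessity direction is fine and matches the paper (the oscillation estimate~\eqref{eq:modcont} for planar $\W^{1,2}$ homeomorphisms, restricted to $\partial \mathbb D$). The sufficiency direction, however, has a genuine gap at precisely the point you yourself flag as the main obstacle: the construction of a finite-energy homeomorphic extension near the cusp that \emph{matches the prescribed boundary values} $\varphi$ on the two sides. You assert that a suitable model map has Dirichlet energy comparable to $[\omega_\varphi(\rho)]^2/\rho$ per dyadic annulus and that ``the cusp geometry forces the extra factors to cancel,'' but no map is produced and no estimate is verified; ``perhaps a Beurling--Ahlfors-type or harmonic-type interpolation'' is a plan, not a proof. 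Moreover, the claim that $\omega_\varphi$ is the only input governing the cusp estimate cannot be right as stated: the extension must also absorb the tangential oscillation of $\varphi$ along each smooth side approaching the cusp, which is controlled only through the Douglas condition, so any correct estimate has to combine the two hypotheses, and organizing that interpolation inside a region of width $\sim r^\beta$ is exactly the difficulty. A secondary, more easily repaired point: you invoke a ``local/quantitative version'' of Theorem~\ref{thm:quasidisk} along the smooth arcs, which is not available as stated; the paper instead separates the singular boundary points by smooth crosscuts, leaving a central piece that is bi-Lipschitz equivalent to the disk.

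For contrast, the paper builds no cusp model at all: its proof of Theorem~\ref{thm:p=2} is verbatim the proof of Theorem~\ref{thm:mainp>2}. One separates each cusp by crosscuts; on a triangle $S$ one constructs the ``boundary diffusion'' $H$ as the coordinatewise harmonic extension of the boundary data (made injective on the rays $L_t$ by a small perturbation $\Psi$), then uses the modified map $H^*$ to replace $\varphi_1$ by a Lipschitz boundary value on a slightly shrunk target, and finishes with Kovalev's Lipschitz extension (Theorem~\ref{thm:lip}) and the inversion $z\mapsto 1/\bar z$ to pass to all of $\C$. The only place $p>2$ entered that argument was Lemma~\ref{lem21}, i.e.\ the finiteness of $\int_0^1 |f(x)|^2 x^{-1}\,\dtext x$ for the reduced boundary map $f$ with $f(0)=0$; under the hypothesis~\eqref{eq:modcontphi} this is immediate, since $|f(x)|=|f(x)-f(0)|\le C\,\omega_\varphi(Cx)$ up to the bi-Lipschitz constants of the reduction. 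So in the paper the modulus condition is used solely to supply that one integral, while your route would require carrying out, from scratch, the cusp-model construction and its energy estimate with boundary matching --- which is the step your proposal leaves open.
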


\subsection{The one-sided Sobolev Jordan-Sch\"onflies Problem}
As shown in Example~\ref{ex:cuspp=2}, there is a boundary homeomorphism $\varphi \colon \partial \mathbb D \onto \partial \Omega_\beta$ satisfying the Douglas condition which  does not admit a homeomorphic extension from $\overline{\mathbb D}$ onto $\overline{\Omega_\beta}$ with finite Dirichlet energy. However, it is still important to investigate under which conditions on the target such a one-sided Sobolev extension exists.

A careful examination of Example~\ref{ex:cuspp=2} reveals  that a potential reason why there is no such an extension lies in the fact that the internal distance in $\Omega_\beta$ of a pair of points on the cusp is not comparable to their Euclidean distance. It is hence expected that quasiconvexity of the target may be necessary to overcome these difficulties.  A domain $K \subset \C$ is {\it quasiconvex}  if each pair of points can be joined by a quasiconvex path. That is, there exists a constant $c\ge 1$ such that for all points $x,y\in K$ there exists a rectifiable path $\gamma$ joining $x$ and $y$, and satisfying
\begin{equation}\label{eq:qc}
\abs{\gamma} \le c \abs{x-y} \, . 
\end{equation}
Here $\abs{\gamma}$ stands for the length of the quasiconvex path $\gamma$. The notion of quasiconvexity plays a prominent role in GFT, see e.g.~\cite{GeMo, Ge2, NV, Va1.5} and the references mentioned therein.

An example of such a domain is the complementary domain of $\Omega_\beta$.  For a precise formulation, let $\mathcal K_\beta$ be an  outer cusp domain where the cusp is formed by the graph of the function $x \mapsto \abs{x}^\beta$ near $0$,  $\beta >1$, and a smooth curve, see Figure~\ref{fig:outercusp}.
\begin{figure}[htbp]
\centering
\includegraphics[width=0.25\textwidth]
{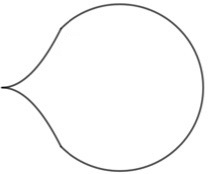}
\caption{An outer  cusp domain, $\mathcal K_\beta$.}
\label{fig:outercusp}
\end{figure}

Now,  for the cusp domains the question reads as follows.
\begin{question} Suppose that a homeomorphism $\varphi \colon \partial \mathbb D \onto \partial \mathcal K_\beta$ satisfies  the Douglas condition. Does $\varphi$  admit a homeomorphic extension $h \colon \overline{\mathbb D} \onto \overline{\mathcal K_\beta}$  in  $\W^{1,2}(\mathbb D, \C)$? 
\end{question}
An answer to this question follows as a corollary from our last theorem.
\begin{theorem}\label{thm:mainonesided}
Let $\Y$ be a quasiconvex domain with piecewise smooth boundary. Then a boundary homeomorphism $\varphi \colon \partial \mathbb D \onto \partial \Y$ admits a Sobolev homeomorphic extension $h \colon \overline{\mathbb D} \onto \overline{\Y}$ with finite Dirichlet integral $\int_\mathbb D \abs{Dh}^2 <\infty$ if and only if $\varphi$ satisfies the Douglas condition.
\end{theorem}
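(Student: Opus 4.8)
The ``only if'' direction is immediate: if $h\colon\overline{\mathbb D}\onto\overline{\Y}$ is a homeomorphic extension with $\int_\mathbb D|Dh|^2<\infty$, then in particular $h\in\W^{1,2}(\mathbb D,\C)$ is a continuous extension of $\varphi$, so $\varphi$ satisfies the Douglas condition. The substance is the ``if'' direction, and my plan is to feed it into Theorem~\ref{thm:p=2}. By that theorem, once $\varphi$ satisfies the Douglas condition, it suffices to produce a modulus of continuity $\omega_\varphi$ for $\varphi\colon\partial\mathbb D\onto\partial\Y$ satisfying the Dini-type condition $\int_0^1[\omega_\varphi(t)]^2/t\,\dtext t<\infty$; Theorem~\ref{thm:p=2} then hands us a homeomorphic extension $h\colon\C\onto\C$ in $\W^{1,2}_{\loc}(\C,\C)$, and restricting to $\overline{\mathbb D}$ (after composing, if necessary, with a homeomorphism fixing $\partial\Y$ to correct orientation/target issues) gives the desired finite-Dirichlet-energy homeomorphism onto $\overline\Y$.

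\textbf{The key step: from Douglas condition plus quasiconvexity to the modulus estimate.} So the whole theorem reduces to the following implication: if $\varphi\colon\partial\mathbb D\onto\partial\Y$ satisfies the Douglas condition and $\Y$ is quasiconvex with piecewise smooth boundary, then $\varphi$ admits a modulus of continuity obeying \eqref{eq:modcontphi}. First I would use the Douglas condition to get quantitative control of $\varphi$ as a map into $(\C,|\cdot|)$: the finiteness of the double integral $\int_{\partial\mathbb D}\int_{\partial\mathbb D}|\varphi(x)-\varphi(y)|^2/|x-y|^2\,\dtext x\,\dtext y$ yields, by the classical trace/Douglas estimates on arcs, a bound of the form $\osc_{I}\varphi\lesssim \big(\int\!\!\int_{\text{near }I}\cdots\big)^{1/2}$ on small subarcs $I\subset\partial\mathbb D$, hence a modulus of continuity $\omega$ for $\varphi$ \emph{measured in the Euclidean metric of $\C$} which satisfies a Dini condition $\int_0^1[\omega(t)]^2/t\,\dtext t<\infty$. (Concretely, the $p=2$ Douglas functional controls $\sum_k 2^k\,\osc^2$ over dyadic arcs, which is exactly the statement that the Euclidean modulus of continuity squared is Dini-summable.) Next comes the role of quasiconvexity: since $\partial\Y\subset\Y$'s closure and $\overline\Y$ inherits quasiconvexity of $\Y$ up to a constant (piecewise smoothness guarantees that each boundary arc is itself a quasiconvex/bi-Lipschitz curve and that the quasiconvexity constant of $\Y$ extends to $\overline\Y$), the internal distance $d_\Y(a,b)$ between boundary points $a,b$ is comparable to $|a-b|$. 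Therefore the Euclidean modulus of continuity of $\varphi$ and the modulus of continuity of $\varphi$ measured by internal distance in $\overline\Y$ differ only by a multiplicative constant. This is the point where quasiconvexity is essential: without it (as in the cusp example $\Omega_\beta$ of Example~\ref{ex:cuspp=2}) the Douglas condition controls only the Euclidean oscillation, which can be exponentially smaller than the internal oscillation near a cusp, destroying \eqref{eq:modcontphi}. Under quasiconvexity the two are equivalent, so the Dini modulus of continuity from the Douglas condition is a genuine (Euclidean) modulus of continuity for $\varphi$, verifying the hypothesis of Theorem~\ref{thm:p=2}.

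\textbf{Applying Theorem~\ref{thm:p=2} and producing a one-sided extension.} With \eqref{eq:modcontphi} in hand, Theorem~\ref{thm:p=2} gives a homeomorphism $h\colon\C\onto\C$, $h\in\W^{1,2}_{\loc}(\C,\C)$, with $h=\varphi$ on $\partial\mathbb D$; in particular $h$ maps $\mathbb D$ homeomorphically onto $\Y$ (the bounded complementary component of $\varphi(\partial\mathbb D)=\partial\Y$), so $h|_{\overline{\mathbb D}}\colon\overline{\mathbb D}\onto\overline\Y$ is a homeomorphism with $\int_\mathbb D|Dh|^2\le\int_{2\mathbb D}|Dh|^2<\infty$, which is exactly the claimed one-sided Sobolev extension. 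As the explicitly advertised corollary, applying this to $\Y=\mathcal K_\beta$ — which is quasiconvex (its only singularity is an \emph{outward} cusp, and the complement of an inward cusp is easily seen to be quasiconvex) and piecewise smooth — answers the Question affirmatively.

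\textbf{Main obstacle.} The delicate point is the passage from the Douglas condition to a \emph{Euclidean} modulus of continuity with Dini-summable square, uniformly, together with the verification that quasiconvexity of $\Y$ (not obviously of $\overline\Y$, and with boundary arcs allowed to meet at corners) upgrades internal-distance control to Euclidean control on $\partial\Y$ with a single constant. Handling the corners of the piecewise-smooth boundary — where a boundary arc may have a cusp pointing \emph{out} of $\Y$ yet $\Y$ is still quasiconvex — requires checking that each smooth piece $\Gamma_j$ is bi-Lipschitz-parametrized and that the finitely many junction points do not spoil the comparison $d_\Y\asymp|\cdot|$; this is where the proof needs genuine care rather than a soft argument.
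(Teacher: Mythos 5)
Your ``only if'' direction is fine, but the ``if'' direction rests on a step that is false, so the proposed route collapses. You claim that the Douglas condition alone yields a \emph{Euclidean} modulus of continuity $\omega$ for $\varphi$ with $\int_0^1[\omega(t)]^2/t\,\dtext t<\infty$, and that quasiconvexity is only needed afterwards to compare Euclidean and internal distances. The $\W^{1/2,2}$-type Douglas functional does not control oscillation in this way, and the paper's own Example~\ref{ex:cuspp=2} is a counterexample: there one has a boundary homeomorphism satisfying the Douglas condition whose Euclidean modulus of continuity is of order $\abs{\log t}^{-\epsilon}$ with $\epsilon<1/2$, which violates \eqref{eq:modcontphi}. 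The same construction read on the \emph{outer} cusp $\mathcal K_\beta$ (which is quasiconvex and piecewise smooth, and is exactly the setting Theorem~\ref{thm:mainonesided} is meant to cover) shows that quasiconvexity does not rescue the implication: the explicit map $h=(f(x),f(x)^\beta y/x)$ has finite Dirichlet energy, so its boundary trace satisfies the Douglas condition, yet its modulus of continuity fails \eqref{eq:modcontphi}. Consequently any argument funneled through Theorem~\ref{thm:p=2} is structurally doomed: \eqref{eq:modcontphi} is \emph{necessary} for a two-sided extension in $\W^{1,2}_{\loc}(\C,\C)$, so your reduction would prove the existence of a global extension for boundary data that demonstrably admit none; the whole point of the one-sided theorem is that it applies precisely to such data, where the obstruction lives on the complementary side.

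What is actually needed is a direct one-sided construction, which is how the paper proceeds: after the reduction of Step 1 of Theorem~\ref{thm:mainp>2} to a model triangle $T$ near the zero-angle point, the base of $T$ is decomposed into dyadic intervals $I_j^\pm$ and the target near the cusp into cells $V_j$ bounded by crosscuts $\alpha_j^\pm$, $\beta_j$ drawn inside $\Y$ (along interpolating graphs between the two smooth boundary functions). Quasiconvexity and the smooth-graph structure enter only to guarantee that these crosscuts can be chosen with length comparable to the Euclidean distance between the images of the corresponding dyadic endpoints. Each cell $U_j\subset T$ is then mapped onto $V_j$ by a Lipschitz homeomorphism from Theorem~\ref{thm:lip} with Dirichlet energy $\lesssim\abs{\partial V_j}^2$, and $\sum_j\abs{\partial V_j}^2$ is bounded by the Douglas integral because the relevant products of dyadic arcs are pairwise disjoint; the remaining thin triangles over each $I_j^\pm$ are handled by the general extension machinery of Theorem~\ref{thm:generalext}. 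In short: no modulus of continuity for $\varphi$ is available or needed; the Douglas integral is converted directly into a summable bound on the energies of a cell-by-cell Lipschitz gluing inside $\Y$.
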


\section{Extending to a target with piecewise smooth boundary}

 \noindent In this section we prove Theorem \ref{thm:mainp>2} and Theorem \ref{thm:p=2}.

\begin{proof}[Proof of Theorem \ref{thm:mainp>2}] We may and do assume that $p>2$ because Theorem~\ref{thm:mainp>2} was already proved in~\cite{KOext}  when $p<2$. We suppose that $\varphi \colon  \partial \dd \to \partial \Y$ satisfies the $p$-Douglas condition and construct the required extension of $\varphi$ to $\dd$ via several steps. The proof will then be completed by using a reflection argument to extend $\varphi$ to $\cc \setminus \dd$ as well.\\\\
\emph{Step 1.} Reducing the problem.\\\\
By definition, $\partial \Y$ splits into a finite collection of curves that are locally the graphs of smooth functions and their intersection points which are the endpoints of two such curves. it is only necessary to construct the required extension in some small neighborhoods  of these intersection points. Indeed, for each such intersection point which is a common point of two $\mathscr C^2$-smooth pieces $\gamma_1$ and $\gamma_2$ of $\partial \Y$, we may separate this intersection point from the rest of the boundary via a smooth crosscut in $\Y$ that starts from a smooth point on $\gamma_1$, ends on a smooth point on $\gamma_2$, and forms any angles of our choice with respect to $\gamma_1$ and $\gamma_2$. By choosing these angles to be positive we can separate the domain $\Y$ into a finite collection of such neighborhoods and one domain which is a collection of smooth curves intersecting at positive angles, so it is bilipschitz-equivalent to the unit disk. See Figure \ref{smoothpieces} for an illustration.

\begin{figure}[h]
\includegraphics[scale=0.4]{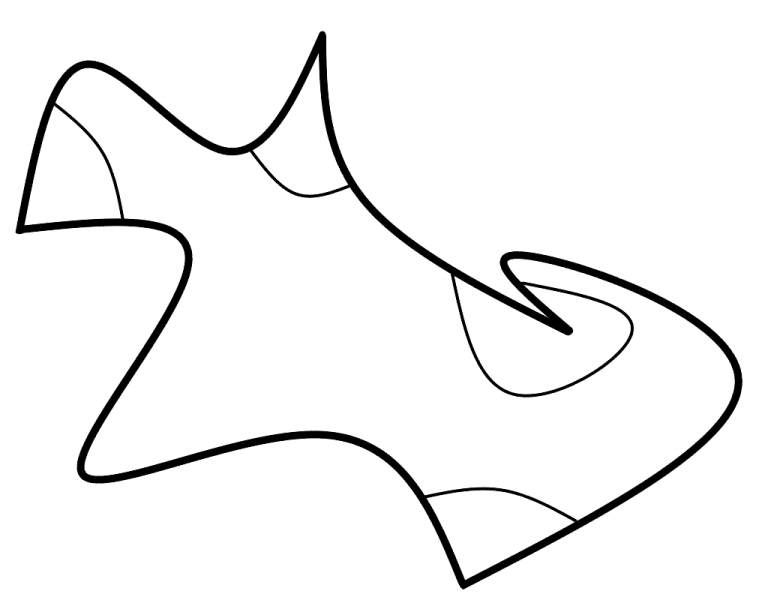}
\caption{The domain $\Y$, split into smaller pieces by smooth crosscuts.}\label{smoothpieces}
\end{figure}

Hence it is enough to consider  $\Y$ as the following domain. The boundary $\partial \Y$ consists of three smooth curves $\Gamma_1$, $\Gamma_2$ and $\Gamma_3$, where $\Gamma_3$ intersects the other two curves at a positive angle of our choice. The curves $\Gamma_1$ and $\Gamma_2$ can be assumed to intersect at an angle of zero, as the other case is trivial. Let  us denote their intersection point by $P$. Both of these curves can be assumed to be the graphs of $\mathscr C^2$-smooth functions in some coordinate system. As for the boundary map $\varphi: \partial \dd \to \partial \Y$, we are to assume that $\partial \dd$ splits into three arcs $I_1,I_2$ and $I_3$, each the preimage of one of the respective curves $\Gamma_i$ under $\varphi$. The map $\varphi$ is given from $I_1 \to \Gamma_1$ and $I_2 \to \Gamma_2$ and satisfies the $p$-Douglas condition on these parts, but from $I_3 \to \Gamma_3$ we must explain how to define the map in order that it will admit a $\mathscr W^{1,p}$-homeomorphic extension to $\Y$. This is done in a later step.\\\\
\emph{Step 2.} Boundary diffusion.\\\\
We concentrate on the map $\varphi_1 = \varphi\vert_{I_1} : I_1 \to \Gamma_1$. We abuse the notation a bit and consider $I_1$ as the unit interval $[0,1]$ and choose the parametrization so that $\varphi(0) = P$ is the intersection point of $\Gamma_1$ and $\Gamma_2$.

Let  $S = \{(x,y) \in \cc : 0 \leq x \leq 1,\, 0 \leq y \leq x\}$ be the unit triangle in the plane. We define a map $H : S \to \Gamma_1$, called the \emph{boundary diffusion}, with the following properties.
\begin{enumerate}
\item\label{it:1} $H$ lies in $\mathscr W^{1,p}(S)$.
\item\label{it:2} For each $t \in [0,1]$, if $L_t$ denotes the line segment between $(0,0)$ and $(1,t)$, the map $H$ takes $L_t$ homeomorphically onto $\Gamma_1$.
\item\label{it:3} $H\vert_{L_0} = \varphi_1$.
\item\label{it:4} $H$ is a smooth map on $L_1$.
\end{enumerate}
First, since $\Gamma_1$ is a smooth graph we may deform it via a global diffeomorphism and for the rest of the construction of $H$ we may assume that $\Gamma_1$ is the unit interval $[0,1]$  with the intersection point $P$ at $0$. The boundary values of $H$ are now easy to define. On $L_0 = [0,1]$, we define $H$ equal to $\varphi_1$. On the segment from $(0,1)$ to $(1,1)$ we define $H$ as the constant function $1$, and on $L_1$ we simply take $H$ as the projection to the $x$-axis. We then define $H$ inside of $S$ as the $p$-harmonic extension of these boundary values. To show that $H$ lies in $\mathscr W^{1,p} (S)$, it then simply remains to verify that the boundary values $H \vert_{\partial S}$ satisfy the $p$-Douglas condition
\begin{equation}\label{pdouglastriangle}
\int_{\partial S} \int_{\partial S} \frac{\left|H(x) - H(y)\right|^p}{\left|x-y\right|^p} \dtext y \, \dtext x \, < \, \infty.
\end{equation}
We split this integral into multiple parts depending on which side of $\partial S$ the points $x$ and $y$ lie. If $x$ and $y$ lie on the same side, the integral is easy to control since on two of the sides $H$ is either a constant or linear function and on the last one it is equal to $\varphi_1$ which was already assumed to satisfy the $p$-Douglas condition. The only nontrivial cases are when $x \in L_0$ and $y$ lies on one of the other sides. Both of these cases are dealt with in the same way so let us assume that $y$ lies on $L_1$. Then we estimate as follows:
\begin{align*}
\int_{[0,1]} \int_{L_1} \frac{\left|H(x) - H(y)\right|^p}{\left|x-y\right|^p} dy \, dx \, &\leq \int_{[0,1]} \int_{L_1} \frac{\left|H(x)-H(0)\right|^p}{\left|x-y\right|^p} dx \, dy \,
\\& =  \int_{[0,1]} \left|H(x)\right|^p \left( \int_{L_1} \frac{1}{\left|x-y\right|^p} dy \right) dx \,
\\& \leq C \int_{[0,1]} \frac{\left|H(x)\right|^p}{\left|x\right|^{p-1}} dx \,.
\end{align*}
It remains to show that the last integral is finite, using the fact that $H$ satisfies the $p$-Douglas condition from $[0,1]$ onto itself (since  $H=\varphi_1$ on $[0,1]$). Our proof for this claim is quite complicated. We state the claim as the following lemma. Note also that this lemma is the only part where the assumption $p > 2$ is used.
\begin{lemma}\label{lem21} Suppose that $f : [0,1] \to [0,1]$ is homeomorphic, $f(0) = 0$ and $f$ satisfies the $p$-Douglas condition on $[0,1]$ with $p > 2$. Then
\[\int_0^1 \frac{\left|f(x)\right|^p}{\left|x\right|^{p-1}} dx \, < \infty.\]
\end{lemma}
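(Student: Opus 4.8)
Since $f$ is a homeomorphism of $[0,1]$ onto itself with $f(0)=0$, it is increasing and $f(1)=1$; in particular $f\ge 0$. The idea is to decompose $f$ near the origin into dyadic increments and to feed the $p$-Douglas energy into this decomposition one scale at a time, where each scale contributes a factor $2^{-k(p-2)}$ — a summable gain precisely because $p>2$. Concretely, set $g(x):= f(x)-f(x/2)\ge 0$ for $x\in(0,1]$. By telescoping, and using continuity of $f$ at $0$ (so that $f(x/2^{N})\to f(0)=0$),
\[
f(x)\;=\;\sum_{k=0}^{\infty}\bigl(f(x/2^{k})-f(x/2^{k+1})\bigr)\;=\;\sum_{k=0}^{\infty} g(x/2^{k}),\qquad x\in(0,1].
\]

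\emph{Step A: the $p$-Douglas energy controls a weighted integral of $g$.} I would restrict the double integral defining the $p$-Douglas condition to the sub-triangle $\{0<y<x/2<1/2\}$. There $|x-y|\in(x/2,x)$, hence $|x-y|^{-p}\ge x^{-p}$, and since $f$ is increasing, $|f(x)-f(y)|=f(x)-f(y)\ge f(x)-f(x/2)=g(x)$. Integrating first in $y$ over $(0,x/2)$, which has length $x/2$,
\[
\int_{0}^{1}\!\!\int_{0}^{1}\frac{|f(x)-f(y)|^{p}}{|x-y|^{p}}\,\dtext y\,\dtext x\;\ge\;\int_{0}^{1}\frac{g(x)^{p}}{x^{p}}\cdot\frac{x}{2}\,\dtext x\;=\;\frac12\int_{0}^{1}\frac{g(x)^{p}}{x^{p-1}}\,\dtext x .
\]
Thus $M:=\int_{0}^{1}\frac{g(x)^{p}}{x^{p-1}}\,\dtext x<\infty$, and in fact $\int_{0}^{a}\frac{g(x)^{p}}{x^{p-1}}\,\dtext x\le M$ for every $a\le1$.

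\emph{Step B: assembling the scales.} Using the dyadic decomposition of $f$ together with Minkowski's inequality in the weighted space $L^{p}\!\bigl((0,1),\,x^{-(p-1)}\,\dtext x\bigr)$,
\[
\Bigl(\int_{0}^{1}\frac{f(x)^{p}}{x^{p-1}}\,\dtext x\Bigr)^{1/p}\;\le\;\sum_{k=0}^{\infty}\Bigl(\int_{0}^{1}\frac{g(x/2^{k})^{p}}{x^{p-1}}\,\dtext x\Bigr)^{1/p}.
\]
The substitution $u=x/2^{k}$ gives $\int_{0}^{1}\frac{g(x/2^{k})^{p}}{x^{p-1}}\,\dtext x=2^{-k(p-2)}\int_{0}^{2^{-k}}\frac{g(u)^{p}}{u^{p-1}}\,\dtext u\le 2^{-k(p-2)}M$, so the series above is dominated by $M^{1/p}\sum_{k\ge0}2^{-k(p-2)/p}<\infty$ since $p>2$. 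This yields $\int_{0}^{1}\frac{f(x)^{p}}{x^{p-1}}\,\dtext x<\infty$, completing the proof.

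\emph{Where the difficulty lies.} The heart of the argument — and the only place $p>2$ is used — is the scaling computation in Step B: every dyadic piece of $f$ near the origin carries the full energy bound $M$ but is damped by $2^{-k(p-2)}$, and convergence of the resulting geometric series is exactly the condition $p>2$. For $p\le2$ this gain is lost and the conclusion genuinely fails, which is consistent with Example~\ref{ex:cuspp=2}; finding a substitute mechanism for $p\le2$ is the real obstacle, and here it is simply avoided by the hypothesis. The remaining points are routine: the monotonicity of a homeomorphism of $[0,1]$ fixing $0$, the interchange of sum and integral (all terms nonnegative, so monotone convergence applies), and the comparison $|x-y|\asymp x$ on the sub-triangle.
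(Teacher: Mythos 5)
Your proof is correct. Step A legitimately extracts from the $p$-Douglas condition the bound $M=\int_0^1 g(x)^p x^{-(p-1)}\,\dtext x<\infty$ with $g(x)=f(x)-f(x/2)$ (the restriction to $\{0<y<x/2\}$, the comparison $|x-y|\le x$, and monotonicity of $f$ are all in order), and Step B (telescoping $f(x)=\sum_k g(x/2^k)$, Minkowski in $L^p\bigl((0,1),x^{-(p-1)}\dtext x\bigr)$, and the substitution giving the factor $2^{-k(p-2)}$) is a valid continuous Hardy-type argument, with $p>2$ entering exactly where you say it does. The underlying idea is the same as in the paper — a dyadic-scale decomposition near $0$ in which each scale is damped by $2^{-(p-2)}$ — but the execution differs: the paper discretizes, setting $a_j=f(2^{-j})$, derives $\sum_j 2^{(p-2)j}(a_j-a_{j+1})^p<\infty$ by testing the Douglas integral on pairs of dyadic intervals $U_{j-1}\times U_{j+1}$, and then proves the discrete implication $\sum_j 2^{(p-2)j}(a_j-a_{j+1})^p<\infty\Rightarrow\sum_k 2^{(p-2)k}a_k^p<\infty$ by a weighted H\"older computation with the auxiliary factor $\alpha=2^{(p-2)/(2p)}$, before converting back to the integral via $f\le a_k$ on $U_k$. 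Your version stays at the continuous level, replacing the discrete H\"older manipulation by the triangle inequality in a weighted $L^p$ space plus a change of variables; this is shorter and arguably more transparent about the role of $p>2$ (a geometric series with ratio $2^{-(p-2)/p}$), while the paper's discrete formulation has the minor advantage of isolating a purely sequential Hardy-type inequality and of matching the dyadic bookkeeping used elsewhere in the paper. Either argument fills the role of Lemma~\ref{lem21} equally well.
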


\begin{proof}[Proof of Lemma~\ref{lem21}]
Let us denote by $U_j = [2^{-(j+1)},2^{-j}]$ the dyadic intervals for $j = 0,1,2,\ldots$. We know from the $p$-Douglas condition that
\[\sum_{j=1}^\infty \int_{U_{j+1}} \int_{U_{j-1}} \frac{\left|f(x) - f(y)\right|^p}{\left|x-y\right|^p} dy \, dx \, < \, \infty.\]
\end{proof}
For $x \in U_{j-1}$ and $y \in U_{j+1}$ we estimate that $ 2^{-j-1} \le |x-y| \le  2^{-j+1}$ and $|f(x) - f(y)| \geq |f\left(2^{-j}\right) - f\left(2^{-(j+1)}\right)|$. Setting $a_j := f(2^{-j})$ we thus obtain
\[\sum_{j=0}^\infty 2^{(p-2)j} (a_j - a_{j+1})^p \, < \, \infty.\]
Define $\alpha = 2^{(p-2)/(2p)}$, from which we see that $\alpha > 1$ and $2^{(p-2)j} = \alpha^{2pj}$. Furthermore, define $c_j$ such that
\[c_j^p = (a_j - a_{j+1})^p \alpha^{2pj}.\]
Hence $\sum_{j=1}^\infty c_j^p < \infty$. Moreover, we see that $a_j - a_{j+1} = c_j\alpha^{-2j}$. Since $a_j \to 0$ as $j \to \infty$, we may sum this up to get that
\[a_k = \sum_{j=k}^\infty a_j - a_{j+1} = \sum_{j=k}^\infty c_j \alpha^{-2j}.\]
We now estimate via H\"older's inequality that
\begin{align*}
\sum_{k=0}^\infty a_k^p \alpha^{2pk} &= \sum_{k=0}^\infty \left(\sum_{j=k}^\infty c_j \alpha^{-2j}\right)^p \alpha^{2pk}
\\&\leq \sum_{k=0}^\infty \left(\sum_{j=k}^\infty c_j^p \alpha^{-pj}\right)\left(\sum_{j=k}^\infty \alpha^{-\frac{p}{p-1}j}\right)^{p-1} \alpha^{2pk}
\\&= C \sum_{k=0}^\infty \left(\sum_{j=k}^\infty c_j^p \alpha^{-pj}\right)\left( \alpha^{-\frac{p}{p-1}k}\right)^{p-1} \alpha^{2pk}
\\&= C \sum_{k=0}^\infty \sum_{j=k}^\infty c_j^p \alpha^{p(k-j)}
\\&= C_2 \sum_{j=0}^\infty c_j^p \sum_{k=0}^j \alpha^{p(k-j)}
\\&\leq C_2 \sum_{j=0}^\infty c_j^p \\&< \infty.
\end{align*}
Hence we have shown that $\sum_{k=0}^\infty a_k^p 2^{(p-2)k} < \infty$. Now if $x \in U_k$, then $f(x) \leq a_k$. Hence
\[\int_0^1 \frac{\left|f(x)\right|^p}{\left|x\right|^{p-1}} dx = \sum_{k=0}^{\infty}\int_{U_k} \frac{\left|f(x)\right|^p}{\left|x\right|^{p-1}} dx \leq C\sum_{k=0}^\infty a_k^p 2^{-k} \frac{1}{2^{-k(p-1)}} < \infty.\]
This finishes the proof of Lemma~\ref{lem21}. \qed\\

Returning to the proof of Theorem \ref{thm:mainp>2}, we have now shown all but one of the claimed properties of the boundary diffusion $H$. It remains to address the second point of showing that $H$ takes $L_t$ homeomorphically onto $\Gamma_1 = [0,1]$.

First of all, since $H$ is chosen as the coordinate-wise $p$-harmonic extension of its boundary values on $\partial S$. The  $p$-harmonic energy of $H=u+iv \colon S \to \mathbb C$ is defined by
\[\int_S \left(\abs{\nabla u}^p + \abs{\nabla v }^p \right) \, .\]
The $p$-harmonic mapping $H$  has the smallest  $p$-energy of all extensions. Since $H$ maps the endpoints of $L_t$ to $0$ and $1$ due to the chosen boundary values, the image of $L_t$ under $H$ must be the whole unit interval $[0,1]$ by continuity. From this we can infer that $H$ must also map $L_t$ increasingly onto $[0,1]$ since otherwise we could redefine $H$ on each $L_t$ as the smallest increasing replacement of $H\vert_{L_t}$ and this would yield a map of strictly smaller $p$-energy on $S$.

While this does not yet show that $H$ is injective on each $L_t$, we will argue that after a minor modification of $H$ we obtain a map with all of the desired properties. First, due to the classical regularity results for $p$-harmonic functions we find that $H$ is $\mathscr C^1$ in the interior of the triangle $S$, see~\cite{Ur}. Letting $\partial_{r}$ denote the directional derivative in the radial direction, which is also the direction of the segments $L_t$, we find that the set $S_+ = \{z \in S : \partial_r H(z) > 0\}$ must be open. For each $t$, there must be at least one point $z_t$ on $L_t$ which belongs to $S_+$ since $H$ maps $L_t$ to the whole unit interval. Let $r_t > 0$ denote a radius so that the ball $B(z_t,r_t)$ is compactly contained in $S_+$. We may now choose a sequence $z_{t_1},z_{t_2},z_{t_3},\ldots$ of points from the $z_t$ such that the union of all the balls $B(z_{t_j},r_{t_j}/2)$ intersects every possible ray $L_t$, $t \in (0,1)$.

For each $j = 1,2,\ldots$, let $S_j \subset S$ denote the set defined as the union of all line segments $L_t$ which intersect the ball $B(z_{t_j},r_{t_j}/2)$, and $V_j$ the set $S_j \cap B(z_{t_j},r_{t_j})$. We now choose a smooth function $\psi_j : \cc \to \rr$ supported on $S_j$ with the following two properties.
\begin{itemize}
\item If $z \in V_j$, then $\psi_j(z) < 0$ and if $z \in S_j \setminus \bar{V_j}$, then $\psi_j(z) > 0$.
\item The integral of $\psi_j$ over each segment $L_t$ is zero.
\end{itemize}
Such a function is not difficult to construct so we omit the details. We then scale $\psi_j$ down so that both $|\psi_j|$ and $|\nabla \psi_j|$ are uniformly bounded from above  by $2^{-j}$. If necessary, we scale $\psi_j$ further down so that the inequality
\begin{equation}\label{eq:psiineq}
\partial_r H > -2^j\psi_j
\end{equation}
always holds in $\bar{B(z_{t_j},r_{t_j})}$ - this is possible since $\partial_r H \ge c>0$ on $\bar{B(z_{t_j},r_{t_j})}$ by continuity.  We then define a function $\Psi_j$ by
\[\Psi_j(r e^{i\theta}) = \int_0^r \psi_j(t e^{i\theta}) dt.\]
Hence $\partial_r \Psi_j = \psi_j$. Due to the bounds on $\psi_j$ and $\nabla \psi_j$, the sum
\[\Psi := \sum_{j=1}^\infty \Psi_j\]
converges in $\mathscr W^{1,p}(\cc)$.

We now note that the map $\Psi+H$ satisfies all the properties \ref{it:1}., \ref{it:3}. and \ref{it:4}. we required from $H$ and is also injective on each $L_t$; that is, it satisfies the property \ref{it:2}. To verify this, $\Psi$ is zero on $\partial S$ the boundary values are unchanged. Since $\Psi$ is in $\mathscr W^{1,p} (\mathbb C)$ the Sobolev-regularity is preserved. We now claim that $\partial_r(\Psi + H) > 0$. If we are in one of the sets $B(z_{t_j},r_{t_j})$, the inequality \eqref{eq:psiineq} applies. Hence we find that
\[-\partial_r \Psi = \sum_{j=1}^\infty -\psi_j < \sum_{j=1}^\infty 2^{-j} \partial_r H = \partial_r H.\]
If a point $z$ does not belong to any of the $B(z_{t_j},r_{t_j})$, we have that $\psi_j(z) \geq 0$ for all $j$. Furthermore, $z$ must belong to one of the segments $L_t$. Take a $j$ so that $L_t$ intersects $B(z_{t_j},r_{t_j}/2)$, which implies that $z$ belongs to the set $S_j \setminus \bar{V_j}$. Since $\psi_j$ is positive on this set we find that
\[\partial_r \Psi(z) + \partial_r H(z) \geq \psi_j(z) + \partial_r H(z) > 0.\]
This proves our claimed property $\partial_r(\Psi + H) > 0$, which implies that the map $\Psi + H$ is injective on $L_t$. Thus a map with the claimed properties \ref{it:1}.-\ref{it:4}.
\\\\
\emph{Step 3.} Regularizing the boundary map.\\\\
Let us again return to the case where $\yy$ consists of three curves $\Gamma_1,\Gamma_2$ and $\Gamma_3$ as before in Step 1. Our aim is to slightly deform $\Gamma_1$ inside the domain $\yy$ and use the boundary diffusion to replace our given boundary value with a smooth map. To this end, suppose via affine transformation that $\Gamma_1$ is the graph of a smooth function $\Phi$ over the interval $[0,T_0]$ on the $x$-axis, with the intersection point $P$ laying at the origin. We may also suppose that locally the domain $\yy$ is below the curve $\Gamma_1$ and the complement is above it. Since there was some freedom in choosing $\Gamma_3$, we may suppose that $\Gamma_3$ contains a small vertical segment which starts from $(T_0,\Phi(T_0))$ and ends at $(T_0, (1-\varepsilon)\Phi(T_0))$.

Let us now modify the boundary diffusion $H$ defined earlier to define a new map. Writing the boundary diffusion $H$ in coordinates as
\[H(x,y) = (\,A(x,y)\, , \, \Phi(A(x,y))\,),\]
where $\Phi$ denotes the smooth function whose graph $\Gamma_1$ is. Let $f:[0,1] \to [0,1]$ be a smooth strictly increasing function with $f(0)=0$ which will be ''small'', exactly how small we will choose later. We use $f$ to define a new map $H^*$ on the triangle $S$ as
\[H^*(x,y) := \left(\,A(x,y)\, , \, \left(1 - \frac{y}{x}f(A(x,y))\right)\Phi(A(x,y))\,\right).\]
Let us now explain the properties of $H^*$ and also how $f$ is chosen. First of all we wish to verify that $H^*$ lies in the Sobolev space $\mathscr W^{1,p}(S)$. Since $A(x,y)$ is the real part of $H$, we find that $A \in \mathscr W^{1,p}(S)$. As $f$ and $\Phi$ are smooth functions, the issue only lies in the factor $y/x$ in the definition of $H^*$ which is bounded in $S$ but does not have a bounded derivative. 

Thus to conclude that $H^* \in \mathscr W^{1,p}(S)$, we must show that the expression $\frac{y}{x^2} f(A(x,y))$ remains bounded in $S$. Since $y \leq x$ in $S$, we only require that $f(A(x,y)) \leq x$. For each $x$, we consider the quantity $\tau(x) = \max_{y \leq x} A(x,y)$. By the construction of $H$, in particular the fact that $H$ takes each of the segments $L_t$ homeomorphically onto $\Gamma_1$ and continuity, we find that $\tau$ is strictly increasing, continuous, and $\tau(0) = 0$. Let $\tau^{-1}$ denote its inverse function. Choosing now $f$ to be smooth, strictly increasing, and so small that $f(t) \leq \tau^{-1}(t)$, we find that $f(A(x,y)) \leq f(\tau(x)) \leq x$. Hence $H^* \in \mathscr W^{1,p}(S)$.

Now note that on each $L_t$, the quantity $y/x$ is the constant $t$ and hence the map $H^*$ takes each line segment $L_t$ homeomorphically onto the curve
\[\gamma_t(s) := (\,s\, , \, (1-tf(s))\Phi(s)\,).\]
These curves are the graphs of smooth functions over $[0,T_0]$ which start at the origin. Note that the smaller function $f$  is, the closer these curves are to $\Gamma_1$. Hence we may choose $f$ so small that all of these curves  lie in $\yy$. Let now $\yy_0$ denote the region bounded by $\Gamma_1$, $\gamma_1$, and the vertical line segment between $(1,\Phi(1))$ and $(1,(1-f(1))\Phi(1))$ which joins the other endpoints of the former two curves. This vertical line segment may be assumed to be a part of $\Gamma_3$. This way $H^*$ becomes a $\mathscr W^{1,p}$-Sobolev homeomorphism from $S$ to $\yy_0 \subset \yy$. Moreover, since $H$ is Lipschitz on $L_1$ we find that $H^*$ is a Lipschitz map from $L_1$ to $\gamma_1$ and it is also a linear map on the line segment from $(1,0)$ to $(1,1)$. Moreover, it is equal to the given boundary value $\varphi_1$ on $L_0$.\\\\
\emph{Step 4.} Finishing the proof.\\\\
In essence, the construction of the map $H^*$ has allowed us to replace the boundary value $\varphi_1$ by a Lipschitz boundary value   $\gamma_1$ which is part of the boundary of the slightly smaller domain $\yy \setminus \yy_0$. This allows us to assume from the beginning that $\varphi_1$ is Lipschitz. From a similar construction on $\Gamma_2$ we may further assume that $\varphi$ is Lipschitz on the whole boundary of $\dd$. But then it admits a homeomorphic Lipschitz-extension by Theorem~\ref{thm:lip}. Hence we have shown that $\varphi: \partial \dd \to \partial \yy$ admits a homeomorphic extension $h: \bar{\dd} \to \bar{\yy}$ in $W^{1,p}(\dd)$.

To extend the map $h$ into the complement $\cc \setminus \dd$ is now quite simple. For example we may use the following reflection argument. Suppose that $0 \in \dd$ and $0 \in \yy$. Let $\tau(z) = 1/\bar{z}$ denote an inversion map, and define $\yy^* = \tau(\yy)$ so that $\yy^*$ is also a piecewise smooth domain. Consider the boundary map $\tau \circ \varphi : \partial \dd \to \partial \yy^*$. Since $\tau$ is locally bilipschitz in $\cc \setminus \{0\}$, $\tau \circ \varphi$ satisfies the $p$-Douglas condition. It thus admits a homeomorphic extension $h^* : \bar{\dd} \to \bar{\yy^*}$ in $W^{1,p}(\dd)$. Now if $z \in \cc \setminus \dd$, we define $h(z) = \tau(h^*(\tau(z)))$. Again using the fact that $\tau$ is locally bilipschitz in $\cc \setminus \{0\}$, the identity on $\partial \dd$ and an involution, we readily see that this defines $h: \cc \to \cc$ as a homeomorphism in $W^{1,p}_{\loc}(\cc,\cc)$ and equal to $\varphi$ on $\partial \dd$.
\end{proof}

\begin{proof}[Proof of Theorem \ref{thm:p=2}] The proof of this theorem is simply a repeat of the above proof of Theorem \ref{thm:mainp>2}. The only part where the assumption $p > 2$ was used in that proof was in the proof of Lemma \ref{lem21}. However, it is clear that the assumption \eqref{eq:modcontphi} implies the statement of Lemma \ref{lem21} in our case. Thus the proof is complete.
\end{proof}

\section{The general extension result}\label{sec:genext}

In this section we prove the following general extension result, which will be used in the proofs of Theorem~\ref{thm:quasidisk} and Theorem~\ref{thm:mainonesided}. This result may also be of independent interest as we expect it could be applied to future studies as well. Before stating the theorem, we describe the notion of a dyadic family of arcs on $\partial \dd$.

For a fixed $n_0 \in \mathbb{N}$, a family of closed arcs $I = \{I_{n,j} \subset \partial \dd : n \geq n_0, \ j = 1,2,\ldots,2^n\}$ is called dyadic if the following conditions hold. For each fixed $n$ there are $2^n$ arcs $I_{n,j}$ in $I$ which are of equal length, pairwise disjoint apart from their endpoints, and cover $\partial \dd$. For each arc $I_{n,j}$ there are two arcs in $I$ of half the length of $I_{n,j}$ and so that their union is exactly $I_{n,j}$, these intervals are called the \emph{children} of $I_{n,j}$ and $I_{n,j}$ is the \emph{parent}. 

\begin{theorem}\label{thm:generalext} Suppose that $\yy$ is a Jordan domain and $\varphi : \partial \dd \to \partial \yy$ is a boundary homeomorphism. For $n_0 \in \mathbb{N}$ suppose that there is a dyadic family $I = (I_{n,j})$ of closed arcs on $\partial \dd$ such that the following hold.
\begin{itemize}
\item For each $I_{n,j}$, there exists a crosscut $\Gamma_{n,j}$ in $\yy$ connecting the two endpoints of the boundary arc $\varphi(I_{n,j}) \subset \partial \yy$ and such that the estimate
\begin{equation}\label{gammasumcond}\sum_{n=n_0}^\infty 2^{(p-2)n}\sum_{j=1}^{2^n} |\Gamma_{n,j}|^p \, < \, \infty\end{equation}
holds.
\item The crosscuts $\Gamma_{n,j}$ are all pairwise disjoint apart from their endpoints at the boundary, where $n,j$ are allowed to range over all their possible values.
\end{itemize}
Then $\varphi$ admits a homeomorphic extension from $\bar{\dd}$ to $\bar{\yy}$ in the class $\mathscr W^{1,p}(\dd)$.
\end{theorem}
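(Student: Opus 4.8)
\emph{Overview and two matching decompositions.} The plan is to build $h$ by hand on a dyadic decomposition of $\overline{\dd}$ modelled on the decomposition of $\overline{\yy}$ cut out by the crosscuts $\Gamma_{n,j}$, and then to estimate the $p$-energy tile by tile so that the sum reproduces the series in \eqref{gammasumcond}. Because the $\Gamma_{n,j}$ are pairwise disjoint and $\Gamma_{n,j}$ joins the two endpoints of the arc $\varphi(I_{n,j})$, the family $\{\Gamma_{n,j}\}$ cuts $\overline{\yy}$ into a central Jordan region $\yy_0$, bounded by $\Gamma_{n_0,1},\dots,\Gamma_{n_0,2^{n_0}}$, together with, for each node $(n,j)$ of the dyadic tree, a region $S_{n,j}$ bounded on the outside by $\Gamma_{n,j}$ and on the inside by its two children $\Gamma_{n+1,2j-1},\Gamma_{n+1,2j}$, and carrying no arc of $\partial\yy$. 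On the disk side I fix a nested family of crosscuts $J_{n,j}\subset\dd$, with $J_{n,j}$ joining the two endpoints of $I_{n,j}$ and chosen so that the resulting tiles $T_{n,j}$ have diameter $\simeq 2^{-n}$ and as tame a shape as possible, and obtain the analogous decomposition of $\overline{\dd}$ into $\dd_0$ and tiles $T_{n,j}$. The two decompositions are combinatorially identical: the closures of two tiles meet along a full crosscut exactly when the nodes are in a parent--child relation, in a single boundary point when they are cousins, and not at all otherwise. Since $\varphi$ and $\varphi^{-1}$ are uniformly continuous, $\diam T_{n,j}\to 0$ and $\diam S_{n,j}\to 0$ as $n\to\infty$; hence any map assembled from homeomorphisms $\overline{T_{n,j}}\to\overline{S_{n,j}}$ that agree on shared crosscuts, together with a homeomorphism $\overline{\dd_0}\to\overline{\yy_0}$, automatically extends continuously to a homeomorphism $\overline{\dd}\to\overline{\yy}$ equal to $\varphi$ on $\partial\dd$.

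\emph{Construction of the maps.} On $\dd_0$, a fixed region with rectifiable boundary, parametrize the $\Gamma_{n_0,j}$ by arclength and extend to a $\mathscr W^{1,p}$-homeomorphism onto $\overline{\yy_0}$ --- away from its finitely many vertices this is a bi-Lipschitz deformation, and near the vertices it is treated exactly like the tiles below --- contributing a fixed, finite amount of energy and fixing the parametrizations of the level-$n_0$ crosscuts. I then define $h$ on the tiles inductively on $n$. Given the homeomorphism $h\colon J_{n,j}\to\Gamma_{n,j}$ already fixed, choose arclength parametrizations of $\Gamma_{n+1,2j-1}$ and $\Gamma_{n+1,2j}$; foliate $T_{n,j}$ by crosscuts $\lambda_\sigma$ joining the two endpoints of $I_{n,j}$ and running from $J_{n,j}$ at $\sigma=0$ to $J_{n+1,2j-1}\cup J_{n+1,2j}$ at $\sigma=1$; foliate $S_{n,j}$ by crosscuts $\mu_\sigma$, which can be taken of length $\lesssim|\Gamma_{n,j}|+|\Gamma_{n+1,2j-1}|+|\Gamma_{n+1,2j}|$, joining the images of those endpoints; and let $h$ carry $\lambda_\sigma$ onto $\mu_\sigma$ at constant speed. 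This is a homeomorphism $\overline{T_{n,j}}\to\overline{S_{n,j}}$ compatible with the inherited data on $J_{n,j}$, whose restrictions to the two child crosscuts furnish the data at level $n+1$.

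\emph{The per-tile energy estimate---the crux.} Everything rests on the bound
\[
\int_{T_{n,j}}|Dh|^p\ \lesssim\ 2^{(p-2)n}\bigl(|\Gamma_{n,j}|^p+|\Gamma_{n+1,2j-1}|^p+|\Gamma_{n+1,2j}|^p\bigr),
\]
with a constant independent of $n$ and $j$. In the foliation coordinates the $p$-energy splits into a bulk term, which is $(\diam T_{n,j})^{2-p}\simeq 2^{(p-2)n}$ times roughly $\sup_\sigma|\mu_\sigma|^p$ (since $h$ stretches the leaf $\lambda_\sigma$, of length $\simeq 2^{-n}$, onto $\mu_\sigma$), and into terms concentrated near the at most three shared vertices, where both $T_{n,j}$ and $S_{n,j}$ degenerate and the leaves shrink to a point. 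Near such a vertex the critical contribution is, after a change of variables, roughly of the form $\int_0^\delta|\mu_\sigma|^p\,\rho(\sigma)^{1-p}\,d\sigma$ --- precisely the integral handled in Step~2 of the proof of Theorem~\ref{thm:mainp>2} and controlled by Lemma~\ref{lem21} --- and here the arclength choice of parametrizations makes the relevant boundary correspondences Lipschitz, so this term is finite and obeys the displayed bound, up to error terms of the same form. Choosing the crosscut family $J_{n,j}$ so that the (unavoidable) degeneracies of $T_{n,j}$ at its vertices are compatible with those of $S_{n,j}$ at the images of the corresponding dyadic points, uniformly over all tiles and over the whole range $p\in[1,\infty)$, is the step I expect to be the principal obstacle; this is where the precise exponent in \eqref{gammasumcond} is genuinely used and where the analytic methods of the earlier literature are replaced by the direct construction advertised in the introduction.

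\emph{Summation.} Adding the per-tile bounds over all nodes, the two child terms re-index into a tail of the same series, whence
\[
\int_\dd|Dh|^p\ \lesssim\ \int_{\dd_0}|Dh|^p+\sum_{n\ge n_0}2^{(p-2)n}\sum_{j=1}^{2^n}|\Gamma_{n,j}|^p\ <\ \infty
\]
by the finite energy on $\dd_0$ and hypothesis \eqref{gammasumcond}. Combined with the continuity and injectivity from the first two steps, $h$ is the sought homeomorphic extension of $\varphi$ in $\mathscr W^{1,p}(\dd)$.
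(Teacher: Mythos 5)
Your global skeleton is the same as the paper's: cut $\yy$ along the crosscuts into a central region plus tiles bounded by $\Gamma_{n,j}$ and its two children, build a combinatorially matching dyadic decomposition of $\dd$, map tile to tile with controlled Lipschitz/energy constants, and sum to recover \eqref{gammasumcond}. But the one step you flag as ``the principal obstacle'' --- what to do at the two vertices of each tile, where the crosscut $\Gamma_{n,j}$ meets its child on $\partial\yy$ and both the disk tile and the target tile degenerate --- is precisely the heart of the proof, and your proposal does not close it. Mapping a foliation $\lambda_\sigma$ onto a foliation $\mu_\sigma$ at constant speed gives no control near these vertices: the hypotheses of the theorem control only the total lengths $|\Gamma_{n,j}|$, not the rate at which the leaves $\mu_\sigma$ shrink as one approaches a vertex of $S_{n,j}$, so the ``error terms'' you mention can carry infinite $p$-energy. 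The fallback you invoke, Lemma~\ref{lem21} via Step~2 of Theorem~\ref{thm:mainp>2}, is not available here: that lemma requires $p>2$ and a $p$-Douglas condition on the boundary correspondence, whereas Theorem~\ref{thm:generalext} assumes no Douglas condition at all and must cover $p\le 2$ (it is applied with $p=2$ in Theorem~\ref{thm:mainonesided} and for all $1\le p<\infty$ in Theorem~\ref{thm:quasidisk}). So as written the argument has a genuine gap exactly where the exponent in \eqref{gammasumcond} must be used.

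The paper's device for this step avoids any ``compatibility of degeneracies'' altogether, by exploiting freedom on the target side. Each disk tile $U_{n,j}$ is split into a central quadrilateral, uniformly bilipschitz to a disk of radius $2^{-n}$, and two thin triangles at the degenerate vertices (angle $\beta_n=\alpha_n-\alpha_{n+1}$). On the target side one cuts off, near each common endpoint of $\Gamma_{n,j}$ and its child, a small region $R_{n,j}^{\pm}$ of perimeter comparable to a parameter $d_n$ that may be taken as small as one wishes; choosing $d_n$ small enough, the thin triangles map onto these regions by homeomorphisms with Lipschitz constant at most $4^{-n}$ (Theorem~\ref{thm:lip}), so their total energy is summable regardless of $p$ and of the local geometry of $\partial\yy$. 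The remaining central quadrilateral is nondegenerate at scale $2^{-n}$, its boundary map is taken at constant speed, and Theorem~\ref{thm:lip} gives a homeomorphic extension with Lipschitz constant $\lesssim 2^{n}|\partial V_{n,j}|$, whence $\int_{S_{n,j}}|Dh|^p\lesssim 2^{-2n}\bigl(2^{n}|\partial V_{n,j}|\bigr)^p$, which sums to \eqref{gammasumcond}. If you replace your constant-speed leaf-to-leaf map near the vertices by this shrinking-collar construction, your outline becomes essentially the paper's proof.
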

\begin{proof} Note first that since the curves $\Gamma_{n,j}$ are constructed based on the dyadic family $I$, the notion of children and parents is inherited from $I$ to these curves.
The outline of the proof is  simple. We a make a  dyadic type decomposition of $\dd$ with sets $U_{n,j}$. Accordingly we split our target domain $\yy$ into sets $V_{n,j}$ which correspond to the image of  $U_{n,j}$. The length of the boundary of $V_{n,j}$ is controlled by the length of $\Gamma_{n,j}$ and its children.  We then obtain the desired extension by constructing a Lipschitz homeomorphism from each $U_{n,j}$ onto $V_{n,j}$, and control the Sobolev-norm by the sum in \eqref{gammasumcond}. The proof is split into a number of steps as follows.
\\\\
\emph{Step 1.} Reducing the problem to a triangle.\\\\
We first consider the highest generation of curves $\Gamma_{n_0,j}$, $j = 1,\ldots,2^{n_0}$. These curves split the domain $\yy$ into one central domain $\yy_0$ and $2^{n_0}$ domains $\yy_j$, $j = 1,\ldots,2^{n_0}$, which are bounded by the $\Gamma_{n_0,j}$ and the boundary arcs $\varphi(I_{n,j})$. On $\dd$ we make a similar construction, connecting the endpoints of the arcs $I_{n,j}$ via smooth, pairwise disjoint curves in $\dd$ so that $\dd$ also splits into a central domain $D_0$ and $2^{n_0}$ domains $D_j$. We choose these smooth curves to form positive angles with $\partial \dd$ and each other so that all of these domains $D_j$, $j = 0,\ldots,2^{n_0}$ are piecewise smooth with no angles of size zero on the boundary.

We wish to define the homeomorphic extension $h : \dd \to \yy$ of $\varphi$ so that it sends the smooth curves bounding the $D_j$ inside $\dd$ at constant speed to the curves $\Gamma_{n_0,j}$. If this is done, then it is enough to define how $h$ maps $D_j$ to $\yy_j$ for all $j = 0,\ldots,2^{n_0}$. For the central domain case $j = 0$, since $D_0$ is bilipschitz-equivalent with the unit disk we may simply use a Lipschitz homeomorphic extension from $D_0$ onto $\yy_0$ as given by Theorem~\ref{thm:lip}. 

To finish our reduction, let $T$ denote the isosceles triangle in the plane with vertices at $(-1,0)$, $(1,0)$ and $(0,1)$. Since each of the sets $D_j$ is bilipschitz-equivalent with $T$, we replace the sets $D_j$ with $T$ in our construction. Since the construction is similar for each $j$, we suppose that $j=1$. We may also suppose that our given boundary map, still denoted by $\varphi$ but now defined on $\partial T$ with some abuse in notation, maps the base of the isosceles $\partial T$ on the real line to $\partial \yy$ and the legs onto the curve $\Gamma_{n_0,1}$ with constant speed. The arcs $I_{n,j}$ can now be supposed to lie on the real line and form a dyadic decomposition of the base $[-1,1]$ of $T$. This can be done by choosing the bilipschitz map from $D_1$ to $T$ to map the boundary arc $I_{n_0,1}$ to $[-1,1]$ at constant speed. It remains to see how the extension $h$ is defined from $T$ to $\yy_1$.
\\\\
\emph{Step 2.} Defining the decomposition of $T$ and $\yy_1$.\\\\
\begin{figure}
\includegraphics[scale=0.5]{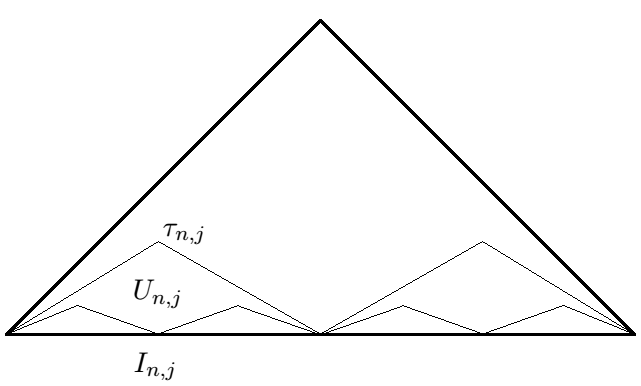}
\caption{Splitting the triangle $T$ into parts.}\label{triangles1}
\end{figure}
We first define sets $U_{n,j} \subset T$ as follows. Let $(\alpha_n)$ denote a fixed strictly decreasing sequence of angles so that $\alpha_{n_0} = \pi/4$ and $\alpha_n \to 0$ as $n \to \infty$. For each interval $I_{n,j}$ we define a curve $\tau_{n,j}$ over this interval, called the \emph{legs} of $I_{n,j}$, by letting $\tau_{n,j}$ consist of the legs of an isosceles triangle with base $I_{n,j}$ and base angles equal to $\alpha_n$. Thus the legs of the largest dyadic interval, which is simply $[-1,1]$, are the legs of the original triangle $T$. Now for each interval $I_{n,j}$, the set $U_{n,j}$ is the set bounded by the legs $\tau_{n,j}$ and the legs of the two children of $I_{n,j}$. We denote the children of $I_{n,j}$ by $I_{n,j}^-$ and $I_{n,j}^+$ from left to right and their legs by $\tau_{n,j}^-$ and $\tau_{n,j}^+$ respectively. The idea is that the extension $h$ will map the legs $\tau_{n,j}$ onto the curve $\Gamma_{n,j}$.

We define sets $V_{n,j} \subset \yy$ analogously  to the sets $U_{n,j}$. For each interval $I_{n,j}$, we consider the corresponding curve $\Gamma_{n,j}$ and its two children, denoted by $\Gamma_{n,j}^-$ and $\Gamma_{n,j}^+$. Then the set $V_{n,j}$ is defined as the set bounded by these three curves in $\yy$, see Figure~\ref{gammafig1}.
\\\\
\emph{Step 3.} Defining the map $h$ from $U_{n,j}$ to $V_{n,j}$.\\\\
\begin{figure}
\includegraphics[scale=0.5]{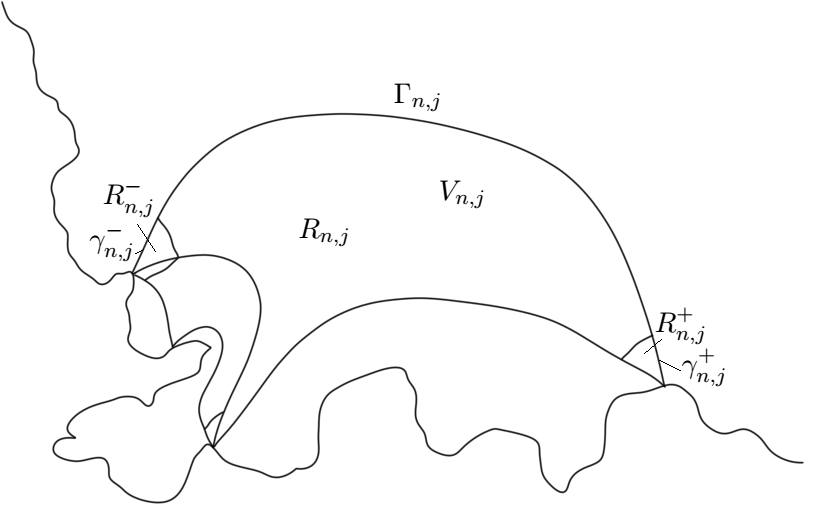}
\caption{The construction of the curves inside $\yy$.}\label{gammafig1}
\end{figure}
We would like to simply take a Lipschitz homeomorphism from $U_{n,j}$ to $V_{n,j}$, but since the sets $U_{n,j}$ have angles that tend to zero as $n \to \infty$ they are not uniformly bilipschitz-equivalent to a scaled down copy of the unit disk. Therefore we must find another way to construct $h$ on $U_{n,j}$. We do this by first splitting $U_{n,j}$ into three smaller sets. Note that the midpoint of the interval $I_{n,j}$ is the endpoint of two line segments on the boundary of $U_{n,j}$. We extend these segments to the opposite direction from this midpoint until they meet the legs $\tau_{n,j}$. This splits the set $U_{n,j}$ into one central quadrilateral $S_{n,j}$ and two thin triangles $S_{n,j}^-$ and $S_{n,j}^+$ with one angle equal to $\beta_n = \alpha_n - \alpha_{n+1}$.
\begin{figure}
\includegraphics[scale=0.5]{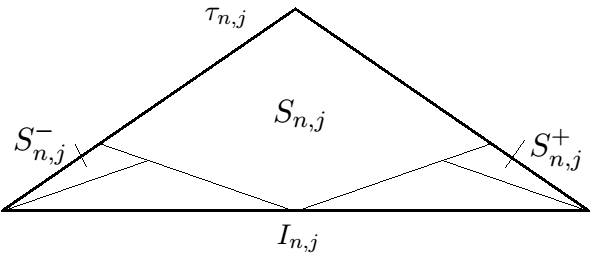}
\caption{The sets $S_{n,j}$, $S_{n,j}^-$ and $S_{n,j}^+$ defined.}\label{rectfig}
\end{figure}

We must split the set $V_{n,j}$ similarly into three regions. To facilitate this, we will let $d_n$ denote a very small distance to be defined in a moment. For each curve $\Gamma_{n,j}$ we isolate two parts of length $d_n$ from this curve which start from the two endpoints $\Gamma_{n,j}$, called $\gamma_{n,j}^-$ and $\gamma_{n,j}^+$ respectively and so that the $\pm$-signs match the children of $\Gamma_{n,j}$. Let us concentrate on the endpoint of $\Gamma_{n,j}$ where $\gamma_{n,j}^-$ starts from. We denote the part of $\Gamma_{n,j}^-$ (the child of $\Gamma_{n,j}$) which starts from this endpoint and has length $d_{n+1}$ by $\gamma_{n,j}^{--}$. We connect the other endpoints of $\gamma_{n,j}^{-}$ and $\gamma_{n,j}^{--}$ by a curve of length at most $2(d_n + d_{n+1})$ inside $V_{n,j}$ to separate a small ``triangle'' $R_{n,j}^-$ from $V_{n,j}$. The existence of such a curve is obvious as we may simply slightly deform the union of the curves $\gamma_{n,j}^{-}$ and $\gamma_{n,j}^{--}$ to obtain one. We similarly define $R_{n,j}^+$, and the remaining part of $V_{n,j}$ is denoted by $R_{n,j}$,  see Figure~\ref{gammafig1}.

We now construct a homeomorphism from $S_{n,j}^-$ to $R_{n,j}^-$ as follows. The numbers $d_n$ will be chosen here. First of all, we must have that for each $n$, $d_n < \min_j |\Gamma_{n,j}|/2$ or otherwise the construction  does not make sense. Second, we will require that the sequence $(d_n)$ is decreasing, which means that the perimeter of $R_{n,j}^-$ is comparable to $d_n$. We will first construct the map $h$ as a homeomorphism from $\partial S_{n,j}^-$ to $\partial R_{n,j}^-$ so that it takes each side of the triangle $S_{n,j}^-$ at constant speed to one of the curves $\gamma_{n,j}^{-}$, $\gamma_{n,j}^{--}$, and the third curve that bounds $R_{n,j}^-$. The order of which side goes to which curve is determined since we require that $h$ maps each $\tau_{n,j}$ to $\Gamma_{n,j}$. We must now choose $d_n$ so small that $h$ extends to a Lipschitz homeomorphism from $S_{n,j}^-$ to $R_{n,j}^-$ with Lipschitz constant at most $4^{-n}$. This can be done since we can first map $S_{n,j}^-$ to the unit disk via a bilipschitz map, and although the Lipschitz constant of this map blows up when $n \to \infty$ we may choose $d_n$ small enough so that when composed with the Lipschitz homeomorphic extension from Theorem~\ref{thm:lip} the Lipschitz constant of the combined map is less than $4^{-n}$.

At this point we may already note that the Sobolev-norm of $h$ over all the sets $S_{n,j}^-$ can be estimated from above by
\[\int_{\bigcup_{n,j} S_{n,j}^-} |Dh|^p \, dx \, dy \, \leq \sum_{n=n_0}^\infty \sum_{j=1}^{2^n} |S_{n,j}^-| 4^{-pn} \leq  \sum_{n=n_0}^\infty 2^n 4^{-pn} < \infty,\]
where we have simply observed that $|S_{n,j}^-| \leq 1$. We similarly map each $S_{n,j}^+$ to $R_{n,j}^+$ with an analogous estimate on these sets.

It remains to see how the central quadrilateral $S_{n,j} \subset U_{n,j}$ is mapped to $R_{n,j}$. Again, we first define $h$ on $\partial S_{n,j}$. The map $h$ is already defined on four small parts of $\partial S_{n,j}$. Two of these are the common parts with $S_{n,j}^-$ and $S_{n,j}^+$. The other two lie on the line segments $\tau_{n,j}^-$ and $\tau_{n,j}^+$ which both end at the midpoint of $I_{n,j}$. These latter ones get mapped to curves of length $d_{n+1}$ which are part of the two children of $\Gamma_{n,j}$. Nevertheless, the Lipschitz constant on each of these four parts is less than $4^{-n}$ so they will not play a role in our estimates. On the rest of the boundary, which is now made up of three connected parts, we define $h$ at constant speed. This defines $h$ uniquely as we require that $h$ maps each $\tau_{n,j}$ to the curve $\Gamma_{n,j}$. Since the length of the set $\partial S_{n,j}$ is comparable to $2^{-n}$, the Lipschitz constant of $h$ on $\partial S_{n,j}$ is thus controlled by $2^n |\partial V_{n,j}|$, where $|\partial V_{n,j}| = |\Gamma_{n,j}| + |\Gamma_{n,j}^-| + |\Gamma_{n,j}^+|$ is the perimeter of $V_{n,j}$. Note that the set $S_{n,j}$ is uniformly bilipschitz-equivalent to a disk of radius $2^{-n}$. Hence we may scale up by $2^n$ and again apply Theorem~\ref{thm:lip} to find a homeomorphic Lipschitz extension $h : S_{n,j} \to R_{n,j}$ with Lipschitz constant at most $C 2^n |\partial V_{n,j}|$.

In total, this gives the estimate
\[\int_{\bigcup_{n,j} S_{n,j}} |Dh|^p \, dx \, dy \, \leq C\left(1+\sum_{n=n_0}^\infty \sum_{j=1}^{2^n} 2^{-2n} \left(2^n |\partial V_{n,j}|\right)^p\right).\]
The sum on the right hand side above is finite due to our assumption \eqref{gammasumcond}. Thus $h$ lies in $\mathscr W^{1,p}$ and defined in this way, $h : \bar{\dd} \to \bar{\yy}$ is  also a homeomorphism which agrees with $\varphi$ on the boundary. Thus the proof is complete.
\end{proof}

\section{The counterexamples}

In this section we prove Theorem \ref{ex:zhangp} and provide explanation for Example \ref{ex:cuspp=2}.

\begin{proof} We are to construct a Jordan domain $\yy$ and a boundary homeomorphism $\varphi: \partial\dd \to\partial\yy$ which admits a Sobolev extension but not a homeomorphic one. We first give a short outline of the construction.

The basic idea is that the boundary of $\yy$ will contain two snowflake-like arcs of infinite length. These two arcs will get closer to each other towards their common endpoint, so that this endpoint cannot be approached by a  curve with finite length from the inside of $\yy$. We then define a boundary map from $\partial \dd$ which sends a lot of mass to the endpoint. We will exploit the fact that both of these arcs are parts of a quasicircle to guarantee that the boundary map has a Sobolev extension. On the other hand since a large amount of mass is sent to a boundary point  it is difficult  to approach from the inside of $\Y$. This will show that a homeomorphic extension has infinite $\W^{1,1}$-energy.
\\\\
Let us first discuss snowflakes. The typical Koch-type snowflake curve is constructed as follows. We choose a parameter $\tau \in (1/4,1/2)$, take the unit interval $I_0 = [0,1]$ and replace this interval by four line segments of length $\tau$ as in Figure \ref{snowfig} to obtain a new curve $I_1$. We then continue this process, replacing each segment in $I_n$ by the configuration in Figure \ref{snowfig} appropriately scaled, translated and rotated to match the endpoints of the segment. The limit curve of this process is called a Koch-type snowflake and it always has infinite length.

\begin{figure}
\includegraphics[scale=0.9]{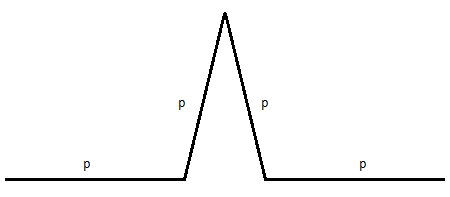}
\caption{The base curve used to construct a snowflake.}\label{snowfig}
\end{figure}

For a number $\epsilon > 0$ and fixed $n\in \mathbb{N}$, we define an $\epsilon,n$-snowflake tube $I_{n,\epsilon}$, which is a piecewise linear Jordan curve, as follows. We first take the curve $I_n$ from above and remove one quarter of its segments by removing exactly those segments that were constructed based on the rightmost segment of $I_1$. This gives a new curve $I_n'$ and the tube $I_{n,\epsilon}$ is defined by constructing the following curve around $I_n'$. For each line segment $S$ in $I_n'$, we take a rectangle with the same width as $S$ and height $2\epsilon$, placed so that the edges of length $2\epsilon$ are perpendicular to $S$ and contain the endpoints of $S$ as their midpoints. In all cases the parameter $\epsilon$ may be chosen as small as we wish and it will be chosen small enough that the height of each rectangle  is always less than $1/10$ of the width. For each two rectangles $R_1$ and $R_2$ based on two neighboring segments, we join the two ends of these rectangles which intersect as in Figure \ref{piecesfig}. For a more formal description of this conjoining process: First remove the two intersecting shorter sides from each of the two rectangles. Then remove the shorter ends of the two longer sides that intersect. Finally extend the two longer sides which do not intersect until they do. See also Figure \ref{tubefig} for an example of the end result. For the first and last segment of $I_n'$, going from left to right, the rectangle constructed on that segment has two edges that did not need to be joined with another rectangle. We call these two segments the left- and right end-edges of $I_{n,\epsilon}$ respectively.

\begin{figure}
\includegraphics[scale=0.7]{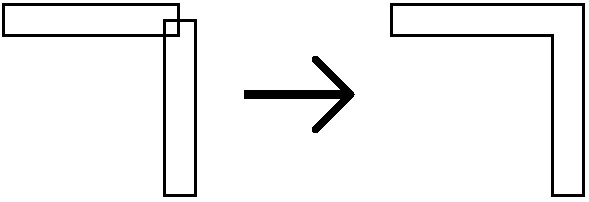}
\caption{Joining two neighboring  rectangles.}\label{piecesfig}
\end{figure}

\begin{figure}
\includegraphics[scale=0.9]{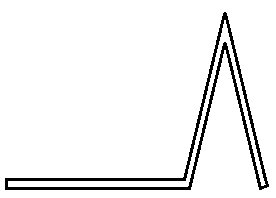}
\caption{An example of a $\epsilon,n$-snowflake tube.}\label{tubefig}
\end{figure}

We now define a way to join these types of tubes to create the boundary of a Jordan domain $\yy$. Given a rapidly decreasing sequence $\epsilon_1, \epsilon_2, \ldots$ of positive numbers and a sequence of positive integers $n_1,n_2,\ldots$ we define a curve $\Gamma$ as follows. We first consider the curve $I_{n_1,\epsilon_1}$. We take the curve $I_{n_2,\epsilon_2}$, scale it by a factor of $1/4$ to create a new curve $I'$ and translate it so that the right end-edge of $I_{n_1,\epsilon_1}$ and the left end-edge of the scaled curve $I'$ have the same midpoint. We then modify the curve $I_{n_1,\epsilon_1}$ slightly by scaling and rotating its right end-edge so that this edge will be equal to the left end-edge of $I'$. We have essentially joined the first tube with the second one, scaled down, see Figure \ref{tubefig2}. We continue this process, adding each tube $I_{n_j,\epsilon_j}$ to the previous one by scaling it down to $1/4$ of the previous one, translating it appropriately and modifying the right end-edge of the previous one to align with the left end-edge of the scaled and translated copy of $I_{n_j,\epsilon_j}$. After scaling down each tube $I_{n_j,\epsilon_j}$, translating it and modifying the right end-edge so that the next tube is able to be joined, the area surrounded by this modified tube will be denoted by $Y_j$ and called the $j$:th part of $\yy$. Continuing this process infinitely makes this process converge to a Jordan domain $\yy$. The boundary $\partial \yy$ can be thought of as being an union of three parts: the leftmost vertical line segment $L_1$ which is the left end-edge of $I_{n_1,\epsilon_1}$ and the two parts $L_2$ and $L_3$ which the remaining boundary splits into when we divide it at the rightmost point of $\partial \yy$.

The specific choice of the sequences $(\epsilon_j)$ and $(n_j)$ above is not important as long as the following conditions are satisfied. We choose each $n_j$ so large and $\epsilon_j$ so small that to traverse from the left end-edge of $Y_j$ to its right end-edge the minimal path length is at least $4^j$.

\begin{figure}
\includegraphics[scale=0.9]{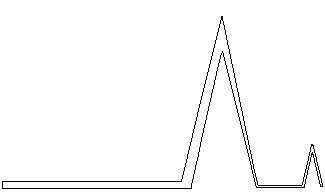}
\caption{Joining two tubes.}\label{tubefig2}
\end{figure}

We now define the boundary map $\varphi: \partial \dd \to \partial \yy$. First, we split $\partial \dd$ into three arcs $A_1,A_2$ and $A_3$, say each of length $2\pi/3$. Under $\varphi$, we map each $A_j$ to the part $L_j$ of $\partial \yy$ as follows. First, we map $A_1$ at constant speed to the segment $L_1$. For $j = 2,3$ the construction is the same in both cases, so we will just explain how $A_2$ is mapped to $L_2$. We divide $A_2$ into disjoint arcs $a_1,a_2,\ldots$ so that $|a_j| = 4|a_{j+1}|$ in the most natural way, meaning that one of the endpoints of $a_1$ is also an endpoint of $A_1$ and the arcs $a_j$ and $a_{j+1}$ always share an endpoint. See Figure  \ref{tubefig3} for a rough illustration of the whole picture.

\begin{figure}
\includegraphics[scale=0.4]{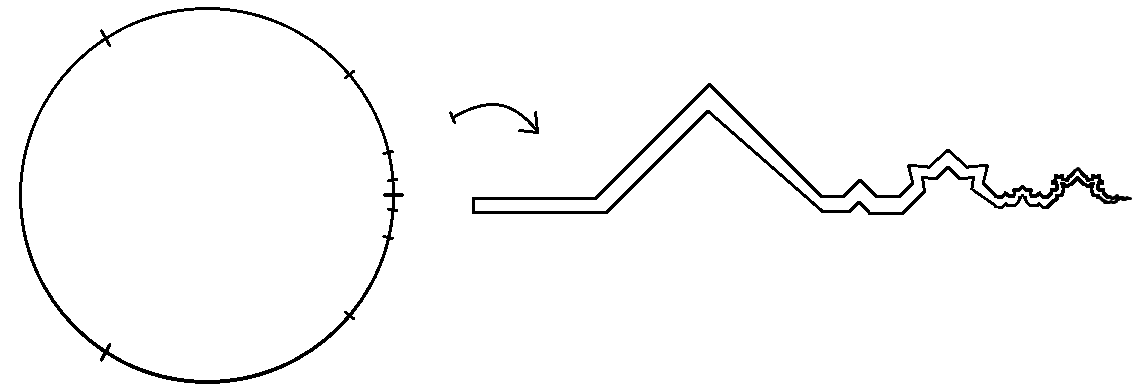}
\caption{A rough illustration of how $\partial \dd$ is divided and mapped to the target domain $\yy$.}\label{tubefig3}
\end{figure}

We map each arc $a_j$ to the corresponding part $\ell_j = Y_j \cap L_2$ on $L_2$ by noting that $\ell_j$ consists of $4^{n_j}$ segments of equal length except possibly for one segment at the rightmost end which could have a slightly different length due to the fact that we had to modify it to glue $Y_j$ to the next part $Y_{j+1}$, however the length of that segment can be supposed to be comparable to the others by a constant independent of $j$. Hence we split the arc $a_j$ also into $4^{n_j}$ equal length smaller arcs and map these each at constant speed to segments in $\ell_j$. This completely defines the map $\varphi$ from $\partial \dd$ to $\partial \yy$.

Note now that both curves $L_2$ and $L_3$ are arcs of a quasicircle since even though they are not exactly the Koch snowflake curve, they can both be associated to a larger class of snowflake curves as defined by Rohde in \cite{Ro}.   In particular,  they are bilipschitz equivalent to such a snowflake curve since there are some slight deformations from the gluing process for example. Moreover, the boundary map $\varphi$ from each $A_j$ to $L_j$ is quasisymmetric and H\"older-continuous, the verification of this takes some more work but it is done in \cite{KKO}, proof of Theorem 1.6. The H\"older-exponent of $\varphi$ can be chosen to be arbitrarily close to $1$ by making the parameter $\tau$ smaller. It is easy then  to verify  that for each $p > 1$, there is $\tau > 1/4$ so that $\varphi$ admits a $\mathscr W^{1,p}$-Sobolev extension since the $p$-Douglas condition is always satisfied by a  H\"older-continuous mapping with H\"older-exponent close enough to $1$. It remains to verify that no homeomorphic extension exists.

Suppose that $\varphi$ would admit a $\W^{1,1}$-homeomorphic extension $h : \dd \to \yy$. We split the arc $A_1 \subset \partial \dd$ into sub-arcs $b_1,b_2,\ldots$ in counterclockwise order so that $|b_j| = 4|b_{j+1}|$. Suppose that the arc $A_2$ is the neighbor  of $A_1$ which is before $A_1$ in counterclockwise direction, as in Figure \ref{tubefig3}. For each $j \geq 2$, we then connect each arc $b_j$ with the arc $a_j \subset A_2$ from before by straight line segments between each pair of points in them. The union of these line segments will be called $R_j$, and the sets $R_j$ have disjoint interiors. See Figure \ref{r2fig}. We may abuse notation and think of each $R_j$ as a rectangle of width $1$ and height $4^{-j}$, since they are bilipschitz-equivalent to such a rectangle with a bilipschitz constant independent of $j$.

\begin{figure}
\includegraphics[scale=0.5]{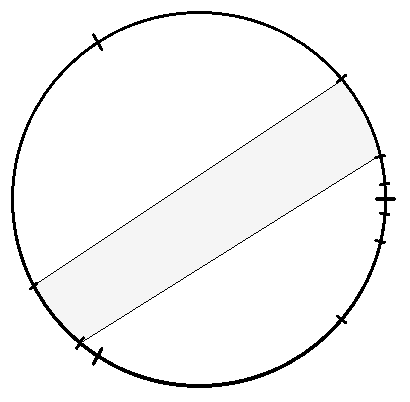}
\caption{The ''rectangle'' $R_2$.}\label{r2fig}
\end{figure}

The smaller, vertical sides of each rectangle $R_j$ are thought to be part of $\partial \dd$, with one side in $A_1$ mapped to the line segment $L_1$ and one side $a_j \subset A_2$ mapped to $\ell_j \subset L_2$. Each of the horizontal line segments between these two vertical sides is mapped to a curve in $\bar{\yy}$ which starts at $L_1$ and ends at $\ell_j$. By construction, such a curve has length comparable to at least $4^j$ by a uniform constant $C$. Thus
\[\int_{R_j} |Dh|\, dx \, dy = \int_0^{\frac{1}{4^j}}\int_0^1 |Dh|\, dx \, dy \geq C\int_0^{\frac{1}{4^j}} 4^j \, dy = C.\]
Since the energy over the sets $R_j$ is not summable in $j$, this contradicts the assumption $h \in \W^{1,1}(\dd)$ and completes the proof.
\end{proof}

Next, we provide explanation for Example \ref{ex:cuspp=2}.

\begin{proof}
Using a conformal map, the result of this example was already shown in \cite{KOext} but with some restrictions on the range of $\beta$ (the polynomial degree of the cusp). Here we will provide a more direct construction and use the same reasoning to provide an example for every $\beta > 1$. \\\\
It is known that a homeomorphism of $\W^{1,2}$-regularity cannot have a local modulus of continuity worse than $\omega(x) = \log^{-1/2}(e + 1/x)$. Hence to find a boundary homeomorphism which satisfies the Douglas condition but cannot be extended as a $\W^{1,2}$-homeomorphism to the inner cusp region $\Omega_\beta$, it would be enough to find a boundary map with a modulus of continuity worse than $\omega(x)$ above but which can be extended as a $\W^{1,2}$-homeomorphism to the complement of $\Omega_\beta$. Since the domain $\Omega_\beta$ is smooth apart from the cusp of degree $\beta$, we may restrict our considerations to a neighbourhood of this cusp. Hence it would be enough to find a boundary map from some Lipschitz domain to an region with an outer cusp of degree $\beta$ so that the boundary map has a modulus of continuity worse than $\omega$ and admits an extension as a $\W^{1,2}$-homeomorphism to this outer cusp region.
\\\\
Let $f(x) = c\log^{-\epsilon}(x)$, where $\epsilon < 1/2$ is chosen so that $\beta\epsilon > 1/2$ and the constant $c$ is such that $f(1) = 1$. Let $\xx = {(x,y) \in \rr^2 : 0 \leq x \leq 1, |y| \leq x}$ be a triangular region and $\xx = {(x,y) \in \rr^2 : 0 \leq x \leq 1, |y| \leq x^\beta}$ be a cusp. We define a map $h : \xx \to \yy$ by
\[h = (u,v) = \left(f(x),f(x)^\beta \frac{y}{x} \right),\]
so that $h$ maps vertical segments to vertical segments and its modulus of continuity is comparable to $f$ on the boundary. We wish to verify that $h$ belongs to $\W^{1,2}(\xx)$. The relevant derivatives are
\[u_x  = f'(x), \quad v_x = \beta f'(x) f(x)^{\beta - 1} \frac{y}{x} \quad \text{ and } \quad v_y = \frac{f(x)^\beta}{x}.\]
Since both $f(x)^{\beta - 1} \leq 1$ and $\frac{y}{x} \leq 1$, we find that $|v_x| \leq |u_x|$ so we may omit $v_x$ from our considerations. For the rest, we compute that
\begin{align*}
\int_\xx |Dh|^2 \, dy \, dx &= \int_0^1 \left(\int_{-x}^x dy\right) \left(f'(x)^2 + \frac{f(x)^{2\beta}}{x^2}\right) dx
\\&= 2c^2 \int_0^1 \left(\frac{1}{x \log^{2+2\epsilon}(x)}+ \frac{1}{x \log^{2\beta\epsilon}(x)}\right) dx.
\end{align*}
This integral is finite since both $2 + 2\epsilon > 1$ and $2\beta\epsilon > 1$. Since $h$ has the required modulus of continuity on the boundary, the proof is complete.
\end{proof}

\section{Extending to a quasidisk}

In this section we prove Theorem \ref{thm:quasidisk}.

\begin{proof} We wish to simply apply Theorem \ref{thm:generalext} to obtain an extension of $\varphi$ to $\bar{\dd}$. After this is done, it will be trivial to extend the map to $\cc \setminus \dd$ as well, for example by reflecting it via bilipschitz map on both the domain and the target side using the reflection property for quasidisks - see for example Chapter II.2 in \cite{GeMo}. To verify the conditions required by Theorem \ref{thm:generalext}, we let $I = (I_{n,j})$ denote a collection of dyadic arcs on $\partial \dd$. To recall briefly what this means, for each fixed $n$ there are $2^n$ arcs in $I$ which are of equal length and cover $\partial \dd$ disjointly apart from their endpoint. For each arc $I_{n,j}$ there are two arcs in $I$ of half the length of $I_{n,j}$ and so that their union is exactly $I_{n,j}$, called the \emph{children} of $I_{n,j}$. We also call the \emph{offspring} of such an interval the collection of its children, the children of its children, and so forth.

We define a collection $\Gamma = (\Gamma_{n,j})$ of curves in $\yy$ by defining $\Gamma_{n,j}$ as the geodesic w.r.t. the hyperbolic metric in $\yy$ which connects the two endpoints of the boundary arc $\varphi(I_{n,j}) \subset \partial \yy$. The notion of children and offspring is inherited from $I$ to $\Gamma$. It remains to verify that the curves in $\Gamma$ are pairwise disjoint and that
\[\sum_{n=1}^\infty 2^{(p-2)n}\sum_{j=1}^{2^n} |\Gamma_{n,j}|^p   \, < \, \infty.\]
Verifying that these hyperbolic geodesics are pairwise disjoint is not too difficult. Suppose first that two such curves $\Gamma_1$ and $\Gamma_2$ share exactly one endpoint. Then we consider the unique conformal map $g : \dd \to \yy$ which, for each endpoint  $P$ of $\Gamma_1$ or $\Gamma_2$, maps $\varphi^{-1}(P)$ to $P$. The preimages of $\Gamma_1$ and $\Gamma_2$ are also hyperbolic geodesics in $\dd$ since $g$ is conformal, and since these preimages are disjoint in $\dd$ also $\Gamma_1$ and $\Gamma_2$ must be disjoint. An immediate consequence of this observation is that each $\Gamma_{n,j}$ is disjoint with its two children. Since the children of each $\Gamma_{n,j}$ must then belong to the region in $\yy$ bounded by $\Gamma_{n,j}$ and $\varphi(I_{n,j})$, we find by an easy induction argument that the offspring of each $\Gamma_{n,j}$ must also be pairwise disjoint and disjoint with $\Gamma_{n,j}$. We also observe that for each pair of curves that are disjoint and neither is the offspring of the other, both of their offspring must then also be disjoint with the others'. It hence only remains to check what happens for small values of $n$.

For $n=1$ there are only two intervals $I_{1,1}$ and $I_{1,2}$, and their images under $\varphi$ share two endpoints so $\Gamma_{1,1} = \Gamma_{1,2}$. The claim is hence false in this case but this is obviously only a technicality, as we may forget about the case $n=1$ in a moment. It is worth to note that $\Gamma_{1,1}$ splits the domain $\yy$ into two domains, each of which must contain two of the curves $\Gamma_{2,j}$, $j = 1,2,3,4$. These curves are disjoint from $\Gamma_{1,1}$ since each of them shares one endpoint with it, and the two which are in the same component are also disjoint since they have a common endpoint. Hence they are all mutually disjoint, which proves the claim.
\\\\
Let us now aim for the desired estimate on the lengths of $|\Gamma_{n,j}|$. For each dyadic interval $I_{n,j} \in I$, we let $I_-$ and $I_+$ denote its neighbors, i.e. the two arcs in $I$ of same length as $I_{n,j}$ which share an endpoint with $I_{n,j}$. Let $d$ denote the Euclidean distance between the endpoints of $\Gamma_{n,j}$. If $x \in I_-$ and $y \in I_+$, then by the three-point property~\eqref{eq:threept} of the quasicircle $\partial \yy$ we have that $2^{-n}|x-y| \leq 3\cdot 2^{-n}$ and $|\varphi(x) - \varphi(y)| \geq C_0 d$, where $C_0$ is some uniform constant. We then recall that since $\yy$ is a quasidisk, it satisfies the hyperbolic segment property, see Chapter II.4 in \cite{GeMo}, implying that $|\Gamma_{n,j}| \leq C_1 d$. Hence we find the estimate
\[|\Gamma_{n,j}|^p\,  \leq\,  C_1 d^p \, \leq \, C 2^{n(2-p)}\int_{I_+}\int_{I_-} \frac{|\varphi(x) - \varphi(y)|^p}{|x-y|^p} \, dx \, dy.\]
Since the sets $I_- \times I_+$ are all pairwise disjoint when $n$ and $j$ range over all their possible values with $n \geq 3$, we may sum this up over all such values to find that.
\[\sum_{n=3}^\infty 2^{n(p-2)}\sum_{j=1}^{2^n} |\Gamma_{n,j}|^p \, \leq \, C\int_{\partial \dd}\int_{\partial \dd} \frac{|\varphi(x) - \varphi(y)|^p}{|x-y|^p} \, dx \, dy \, < \, \infty.\]
This proves the claim and hence Theorem \ref{thm:generalext} now gives the result.

\end{proof}

\section{Extending to a quasiconvex domain}
In this section we prove Theorem \ref{thm:mainonesided}.
\begin{proof}
Since we are dealing with a piecewise smooth boundary, we apply the same reduction  as in Step 1. of  the proof of Theorem \ref{thm:mainp>2}. Therefore it is enough to consider the target as a neighborhood of two smooth pieces at their intersection point. Hence we suppose that $\partial \yy$ consists of three curves $\Gamma_1,\Gamma_2$ and $\Gamma_3$. These are all graphs of smooth functions in some coordinate system, with $\Gamma_1$ and $\Gamma_2$ intersecting at a point $P$ and $\Gamma_3$ intersecting these two curves at a positive angle. The curves $\Gamma_1$ and $\Gamma_2$ may be assumed to intersect at an angle of zero towards the domain $\yy$, because if they met at a full angle this would contradict the quasiconvexity of $\yy$ and if they met at any other angle we could apply a bilipschitz map locally to straighten the angle out.

We may also suppose that the domain of definition is the triangle $T$ with vertices at points $(-1,0)$, $(1,0)$ and $(0,1)$ in the plane. The boundary map $\varphi : \partial T \to \partial \yy$ is assumed to satisfy the Douglas condition and we suppose that $\varphi$ takes the point $(0,0)$ to the intersection point $P$ of $\Gamma_1$ and $\Gamma_2$, and takes  $(1,0)$ and  $(-1,0)$ to the other endpoints of these curves respectively. On the preimage of $\Gamma_3$, which consists of the two non-horizontal sides of $T$, we may suppose that $\varphi$ is defined as a constant speed map onto $\Gamma_3$.

Let us partition the lower boundary of $T$ by defining the intervals $I_j^- = [-2^{-j},-2^{-j-1}]$ and $I_j^+ = [2^{-j-1},2^{-j}]$ for $j \geq 0$. For each $j$, we define a domain  $U_j$ as follows. We first let $\tau_0$ denote the two upper sides of the triangle $T$. Then with the interval $I_0^-$ as the base, we construct an isosceles triangle within the interior of $T$ and call the two legs of this triangle $\tau_0^-$. Similarly we construct $\tau_0^+$ over $I_0^+$, we may even choose $\tau_0^+$ as the reflection of $\tau_0^-$ over the $y$-axis. Finally we scale the curves $\tau_0$, $\tau_0^-$ and $\tau_0^+$ down with respect to the origin by the factor of $2^{-j}$ to define curves $\tau_j$, $\tau_j^-$ and $\tau_j^+$ respectively. Then the region $U_j$ is defined as the domain bounded by $\tau_j$, $\tau_j^-$, $\tau_j^+$ and $\tau_{j+1}$. By construction, each of the regions $U_j$ is bilipschitz-equivalent to a square of side length $2^{-j}$ with a bilipschitz constant independent of $j$.

We now define the counterpart $V_j$ of $U_j$ in $\yy$. Let $A_j^+ := \varphi(I_j^+)$ be the image arc of $I_j^+$ on $\Gamma_1$ and respectively $A_j^-$ the image arc of $I_j^-$ on $\Gamma_2$. We now connect the four endpoints of the arcs $A_j^\pm$ by curves inside of $\yy$ as follows.
\begin{figure}
\includegraphics[scale=0.5]{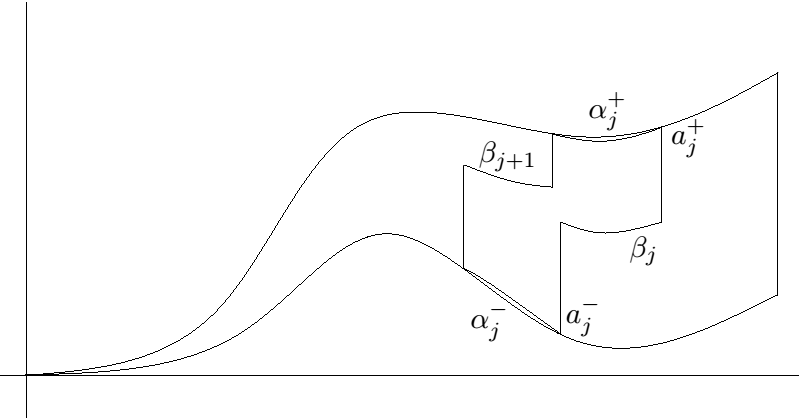}
\caption{Constructing curves between the two graphs.}\label{gammas}
\end{figure}
First, we recall the assumption that $\Gamma_1$ is the graph of a smooth function in some coordinate system. We may suppose that it is the graph of a function $\Phi$ over the interval $[0,1]$ with the point $P$ being at the origin. Now we may in fact assume that the derivative of $\Phi$ at $x = 0$ is zero.  Indeed,  because if the derivative was a different number, say $\Phi'(0) = k$, then at least in a small neighborhood of the origin $\Gamma_1$ could also be written as the graph of a function over the line $y = kx$ instead of the $x$-axis, due to the fact that close enough to $x = 0$ each pair of points on the graph of $\Phi$ must meet at a slope close to $k$. Note that smoothness of the graph is preserved in this change of coordinate system due to the implicit function theorem. Hence by possibly restricting ourselves to a smaller piece of $\Gamma_1$ we may assume that $\Phi'(0)=0$. Due to the same reasoning, we may also suppose that $\Gamma_2$ is a graph of a smooth function $\Psi$ with $\Psi'(0)=0$ over the $x$-axis.

Hence our domain $\yy$ may be assumed to be the one bounded by the graphs of two smooth functions $\Phi$ and $\Psi$ over the interval $[0,1]$, so that $\Phi(0) = \Psi(0) = 0$ and $\Phi(x) > \Psi(x)$ for $x > 0$. Possibly after a bilipschitz change of variables, the curve $\Gamma_3$ is assumed to be the vertical segment from $(1,\Psi(1))$ to $(1,\Phi(1))$. Let us also define  auxiliary curves inside $\yy$ by letting $\gamma_j$ denote the graph of the function $\Phi/j + (1-1/j)\Psi$ over $[0,1]$.

We now define curves $\beta_j$ inside $\yy$ as follows. If $a_j^-$ and $a_j^+$ denote the right endpoints of the arcs $A_j^\pm$, then $\beta_j$ is a crosscut that connects the point $a_j^-$ with $a_j^+$. The exact way how we define $\beta_j$ is by traversing a vertical segment from $a_j^-$ upwards until we hit the curve $\gamma_j$, then traveling along $\gamma_j$ until we are at the $x$-coordinate of the point $a_j^+$, and again going up via a vertical segment to $a_j^+$. Since the curves $\gamma_j$ are pairwise nonintersecting we find that also the curves $\beta_j$ do not intersect each other. Due to the smoothness of $\Psi$ and $\Phi$, the length of $\beta_j$ is comparable to the internal distance between $a_j^+$ and $a_j^-$, which is also comparable to the euclidean distance in this case.

Furthermore, we connect each $a_j^-$ to the other endpoint $a_{j+1}^-$ of $A_j^-$ via a curve $\alpha_j^-$ inside $\yy$ that is close enough to $A_j^-$ so that its length is comparable to the length of $A_j^-$ and it does not intersect any of the $\beta_j$. Again by smoothness, the length of $\alpha_j^-$ is comparable to the internal distance between $a_j^-$ and $a_{j+1}^-$. Similarly we define $\alpha_j^+$ with $+$ in place of $-$ in the previous construction. This way we have defined crosscuts $\beta_j$ and $\alpha_j^\pm$ inside $\yy$ which do not intersect each other and have lengths comparable to the internal distances of their endpoints.

The set $V_j$ is now defined as the region bounded by $\alpha_j^+$, $\alpha_j^-$, $\beta_j$ and $\beta_{j+1}$.
\\\\
In order to define a homeomorphic extension $h : T \to \yy$ of $\varphi$, we first define the map $h$ from $U_j$ to $V_j$. We map each of the four curves which make up the boundary of $U_j$ to the correspording part of $\partial V_j$ by a constant speed map. This will give a Lipschitz boundary homeomorphism from $\partial U_j \to \partial V_j$, which may be extended as a Lipschitz homeomorphism between the interiors with an increase in the Lipschitz norm by at most a constant factor by Theorem~\ref{thm:lip}. The Lipschitz norm of the boundary map is controlled by $2^{-j} |\partial V_j|$. This gives the inequality
\[\int_{U_j} |Dh|^2 \, dx\, dy \leq C|\partial V_j|^2,\] which leads us to find estimates on the perimeter of $V_j$ in terms of the Douglas condition on $\varphi$.

First, we estimate the length of the curve $\alpha_j^-$. We integrate over the sets $I_{j-1}^-$ and $I_{j+1}^-$ to find that
\[\int_{I_{j+1}^-}\int_{I_{j-1}^-} \frac{|\varphi(x) - \varphi(y)|^2}{|x-y|^2} \, dx \, dy \geq c\int_{I_{j+1}^-}\int_{I_{j-1}^-} \frac{|\alpha_j^-|^2}{2^{-2j}}  \, dx \, dy = c|\alpha_j^-|^2.\]
Here we have used the fact that $\Gamma_2$ is a smooth graph to obtain the estimate $|\varphi(x) - \varphi(y)| \geq  c|\alpha_j^-|$ for $x\in I_{j-1}^-$ and $y\in I_{j+1}^-$, as the distance between two points on the graph is always comparable to the difference in their $x$-coordinates. One may replace $-$ by $+$ above to find the same estimate for $|\alpha_j^+|$.

For the curve $\beta_j$, we let $d_j$ denote the minimal distance between the sets $A_j^-$ and $A_j^+$. Then
\[\int_{I_{j}^-}\int_{I_{j}^+} \frac{|\varphi(x) - \varphi(y)|^2}{|x-y|^2} \, dx \, dy \geq c\int_{I_{j}^-}\int_{I_{j}^+} \frac{d_j^2}{2^{-2j}}  \, dx \, dy = c\,d_j^2.\]
However, by triangle inequality we also have the estimate $|\beta_j| \leq d_j + |\alpha_j^-| + |\alpha_j^+|$. All together, this lets us estimate the quantity $|\partial V_j|^2$ from above by a uniform constant times the integral of $|\varphi(x)-\varphi(y)|^2/|x-y|^2$ over the sets $I_{j-1}^- \times I_{j+1}^-$, $I_{j-1}^+ \times I_{j+1}^+$ and $I_{j}^- \times I_{j}^+$. These sets are disjoint in $j$, so by summing up we obtain that
\[\int_{\bigcup_j U_j} |Dh|^2 \, dx \, dy \leq C\sum_j |\partial V_j|^2 \leq C\int_{\partial T}\int_{\partial T} \frac{|\varphi(x) - \varphi(y)|^2}{|x-y|^2} \, dx \, dy.\]
It remains to define the extension $h$ outside the $U_j$ and to control the $\mathscr W^{1,2}$-energy there. For this is enough to consider how $h$ maps the triangle $T_j^- \subset T$ bounded by $I_j^-$ and $\tau_j^-$ to the region in $\yy$ bounded by $\varphi(I_j^-)$ and $\alpha_j^-$. But this part is in fact easy, since we may use the same construction as in the proof of Theorem \ref{thm:generalext}. Let $T_0$ denote the triangle used as the domain of definition in Step 2 of the proof of Theorem \ref{thm:generalext}. Then we simply transform this triangle into $T_j^-$ so that the sides on the real line are mapped to each other, and repeat the same construction done in Step 2 and 3 of Theorem \ref{thm:generalext}.

The $\mathscr W^{1,2}$-energy of $h$ over $T_j^-$ is controlled by the estimate
\[\int_{T_j^-} |Dh|^2 \, dx\, dy \leq C\sum_{D \in \mathcal{D}_j^-} |\varphi(D)|^2,\]
where $\mathcal{D}_j^-$ denotes the collection of all dyadic intervals over the set $I_j^-$. Summing up, we get that
\[\int_{\bigcup_j T_j^-} |Dh|^2 \, dx\, dy \leq C\sum_{D \in \mathcal{D}} |\varphi(D)|^2,\]
where $\mathcal{D}$ denotes the union of all dyadic intervals over the interval $[-1,0]$ which do not contain $0$ as an endpoint (since none of the intervals in $\mathcal{D}_j^-$ do). Using the fact that $[-1,0]$ is mapped to the graph of a smooth function under $\varphi$, for each such dyadic interval $D$ we have the estimate
\[ |\varphi(D)|^2 \leq C\int_{D_{-1}} \int_{D_1}  \frac{|\varphi(x) - \varphi(y)|^2}{|x-y|^2} \, dx \, dy,\]
where $D_{-1}$ and $D_1$ denote the two dyadic neighbors of $D$. Since the sets $D_{-1} \times D_1$ are disjoint when $D$ ranges over all intervals in $\mathcal{D}$, this estimate is summable over $D \in \mathcal{D}$ and yields
\[\int_{\bigcup_j T_j^-} |Dh|^2 \, dx\, dy \leq C \int_{[-1,0]} \int_{[-1,0]}  \frac{|\varphi(x) - \varphi(y)|^2}{|x-y|^2} \, dx \, dy < \infty.\]
The case with $+$'s instead of $-$ is done exactly in the same way. This proves our claim.

\end{proof}

\end{document}